\documentclass[12pt,reqno]{amsart}
\usepackage{amsmath,amsthm,amssymb,amsfonts,amscd}
\usepackage{mathrsfs}%mathtools
\usepackage{bbding}
\usepackage[backref=page]{hyperref}
\usepackage{geometry}
\usepackage{color}
\usepackage{xcolor}

\usepackage{picture,epic}
\usepackage{tikz}

\usepackage{etoolbox}
\usepackage{orcidlink}

\newif\ifrevisionmode
\revisionmodetrue
\numberwithin{equation}{section}

\theoremstyle{plain}
\newtheorem{theorem}{Theorem}[section]
\newtheorem{lemma}[theorem]{Lemma}
\newtheorem{corollary}[theorem]{Corollary}
\newtheorem{proposition}[theorem]{Proposition}

\theoremstyle{definition}
\theoremstyle{remark}
\newtheorem{remark}[theorem]{Remark}

\newcommand{\PGL}{\operatorname{PGL}}
\newcommand{\Tr}{\operatorname{Tr}}
\newcommand{\Nrd}{\operatorname{Nrd}}
\newcommand{\F}{\mathbb F}
\newcommand{\Acal}{\mathcal A_p}
\newcommand{\Bcal}{\mathcal B}

\makeatletter
\def\@tocline#1#2#3#4#5#6#7{\relax
  \ifnum #1>\c@tocdepth % then omit
  \else
    \par \addpenalty\@secpenalty\addvspace{#2}%
    \begingroup \hyphenpenalty\@M
    \@ifempty{#4}{%
      \@tempdima\csname r@tocindent\number#1\endcsname\relax
    }{%
      \@tempdima#4\relax
    }%
    \parindent\z@ \leftskip#3\relax \advance\leftskip\@tempdima\relax
    \rightskip\@pnumwidth plus4em \parfillskip-\@pnumwidth
    #5\leavevmode\hskip-\@tempdima
      \ifcase #1
       \or\or \hskip 1em \or \hskip 2em \else \hskip 3em \fi%
      #6\nobreak\relax
    \hfill\hbox to\@pnumwidth{\@tocpagenum{#7}}\par% <---- \dotfill -> \hfill
    \nobreak
    \endgroup
  \fi}
\makeatother

\begin{document}
\title{PageRank on Lubotzky--Phillips--Sarnak graphs}

\author{Shenghao Hua~\orcidlink{0000-0002-7210-2650}}
\address[1]{Shanghai Institute for Mathematics and Interdisciplinary Sciences (SIMIS), Shanghai, 200433, China}
\address[2]{Research Institute of Intelligent Complex Systems, Fudan University, Shanghai, 200433, China}
\email{huashenghao@vip.qq.com}

\begin{abstract}
We compare return probabilities for random walks stopped at an independent geometric time on Lubotzky--Phillips--Sarnak graphs
and on the infinite regular tree, with the McKay measure giving
the tree value and the Ihara zeta function describing the additional
returns caused by cycles.
We estimate this correction when the mean walk length is comparable to the length of the shortest cycle, a regime in which walks long enough to traverse a cycle receive non-negligible weight.

Unconditionally, we prove an upper bound with a double logarithmic factor and bounded fixed moments over prime levels after multiplication by the number of vertices.
Under the Generalized Riemann Hypothesis for quadratic Dirichlet $L$-functions, the correction is of the same order as the reciprocal
of the number of vertices.
\end{abstract}

\subjclass[2020]{Primary 11N36; Secondary 05C81, 11F25, 11N37}
\keywords{PageRank, Ramanujan graph, spectral discrepancy, Hecke operator, arithmetic statistics}
\date{}

\maketitle
%\setcounter{tocdepth}{1}%使目录只显示到节
%\tableofcontents

%%%%%%%%%%%%%%%%%%%%%%%%%%%%%%%%%%%%%%%%%%%%%%%%%%%%%%%%%%%%%%%%
%%%%%                        Section                       %%%%%
%%%%%%%%%%%%%%%%%%%%%%%%%%%%%%%%%%%%%%%%%%%%%%%%%%%%%%%%%%%%%%%%

\section{Introduction}

The infinite regular tree gives a natural reference for the spectrum of a finite regular graph.
As the girth tends to infinity with the degree fixed, the eigenvalue distribution converges to the Kesten--McKay law \cite{McKay1981}.
A quantitative comparison asks how much the cycles of a finite graph change its spectral averages.

Closed walks give a direct approach to this question.
Returns on the tree project to returns on the graph, while cycles account for the additional returns.
McKay expressed this contribution through non-backtracking walks in his work on spanning trees \cite{McKay1983}.
Serre later related limiting spectral distributions to closed-walk counts and Ihara zeta functions \cite[Section~8]{Serre1997}.
The comparison with the McKay measure also underlies the variance conjecture of Nestoridi and Sarnak for non-backtracking walks \cite{NestoridiSarnak2023}.
Sarnak and Zubrilina prove the conjectured asymptotic for certain
arithmetic graph families through uniform Plancherel estimates for
growing Hecke polynomials \cite{SarnakZubrilina2024}.
See also Sarnak's Penn State lectures \cite{Sarnak2025Lectures}.

Spectral measures can be compared through their integrals against test functions.
PageRank supplies a test function with a direct probabilistic meaning.
Originally introduced to rank web pages, it describes the distribution of a random walk with restarts \cite{PageEtAl1999,Gleich2015}.
For personalized PageRank based at a vertex $x$ on graph $X$, an equivalent description is to start a simple random walk at $x$ and stop it at an independent time $T$ with $\Pr(T=n)=(1-\alpha)\alpha^n$, where $n\ge0$ and $0<\alpha<1$.
Write $\pi_{\alpha,x}(y)$ for the probability of stopping at $y$, and let $\mathcal R_X(\alpha)$ be the average of $\pi_{\alpha,x}(x)$ over the starting vertices.
The spectral formula for this return probability is
\begin{equation*}
 \mathcal R_X(\alpha)=\int f_\alpha(\lambda)\,d\mu_X(\lambda),
 \qquad
 f_\alpha(\lambda)=\frac{1-\alpha}{1-\alpha\lambda/(p+1)}.
\end{equation*}
The same test against $\nu_p$ gives the return probability $\mathcal M_p(\alpha)$ on the regular tree.
Thus the comparison of two return probabilities is also a comparison
of the finite spectrum with the tree spectrum, so we study this comparison through PageRank.
Each eigenvalue of $X$ has mass $1/N$ in $\mu_X$.
In particular, the constant eigenvector contributes exactly $1/N$ to $\mathcal R_X(\alpha)$, since $f_\alpha(p+1)=1$.

We study this comparison on the Ramanujan graphs constructed by
Lubotzky, Phillips, and Sarnak \cite{LPS1988}, whose adjacency operator is also a Hecke operator,
so we can use arithmetic methods to estimate the spectral averages.
From now on, fix a prime $p\equiv1\pmod4$.
A prime $q$ is called admissible if $q\equiv1\pmod4$, $\left(\frac pq\right)=-1$, and $q>2\sqrt p$.
The corresponding Lubotzky--Phillips--Sarnak graph $X_q$ is connected, bipartite, vertex-transitive, and $(p+1)$-regular.
Its nontrivial adjacency eigenvalues lie in the support of $\nu_p$, the two trivial eigenvalues are $p+1$ and $-(p+1)$.
Its number of vertices $N_q$ and its girth $g_q$, the length of a shortest cycle, satisfy
\begin{equation*}
 N_q=q(q^2-1),\qquad g_q=4\log_pq+O_p(1),
\end{equation*}
where the girth estimate is due to Biggs and Boshier
\cite{BiggsBoshier1990}.
Vertex transitivity gives
$\mathcal R_{X_q}(\alpha)=\pi_{\alpha,x}(x)$ for every vertex $x$.
We write
\begin{equation*}
 \Delta_q(\alpha)
 =\mathcal R_{X_q}(\alpha)-\mathcal M_p(\alpha)
 =\int f_\alpha\,d(\mu_{X_q}-\nu_p).
\end{equation*}
This quantity is nonnegative.
It measures the additional probability
of returning to the starting vertex due to cycles in the finite graph.

We let the test function vary with the level by taking $\alpha_{q,c}=1-\frac c{\log q}$, for $c\in I=[c_0,c_1]\subset(0,\infty)$.
For sufficiently large $q$, the mean stopping time is $\log q/c-1$, of the same order as the girth.
The tree return probability has order $1/\log q$ in this range.
At this scale, the rate of convergence matters: a relative $o(1)$ error in the variance can still contribute significantly to the normalized return correction.
We estimate the smaller contribution $\Delta_q$ at the scale
$1/N_q$, uniformly for $c\in I$.
The theorem below gives bounds at each level, a sharper bound under
the Generalized Riemann Hypothesis (GRH), and moment estimates over
prime levels.

Write $\Acal$ for the admissible primes and $\Acal(Q)=\Acal\cap(Q,2Q]$.

\begin{theorem}
\label{thm:main-results}
Fix $p$ and $I=[c_0,c_1]\subset(0,\infty)$.
\begin{enumerate}
\item[\textup{(i)}] For every sufficiently large admissible prime $q$, uniformly for $c\in I$,
\begin{equation}
\label{eq:individual-return-bound}
 \frac1{N_q}\ll_{p,I}\Delta_q(\alpha_{q,c})
 \ll_{p,I}\frac{\log\log q}{N_q}.
\end{equation}
\item[\textup{(ii)}] Assume the GRH for $L(s,\chi_D)$ for every negative fundamental discriminant $D$, where $\chi_D(n)=\left(\frac{D}
  {n}\right)$.
    For every sufficiently large admissible prime $q$, uniformly for $c\in I$,
\begin{equation}
\label{eq:grh-return}
 \Delta_q(\alpha_{q,c})
 \asymp_{p,I}\frac1{N_q}.
\end{equation}
\item[\textup{(iii)}] For every fixed real number $a\ge1$, as $Q\to\infty$,
\begin{equation}
\label{eq:main-moments}
 \frac1{|\Acal(Q)|}\sum_{q\in\Acal(Q)}
 \left(N_q\sup_{c\in I}\Delta_q(\alpha_{q,c})\right)^a
 \ll_{p,I,a}1.
\end{equation}
\end{enumerate}
\end{theorem}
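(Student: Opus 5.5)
The plan is to expand $f_\alpha$ in the spherical (Hecke) basis adapted to $\nu_p$ and to turn $\Delta_q$ into a weighted count of closed non-backtracking cycles, which for $X_q$ become lattice-point counts in the Hamilton quaternions. Write $f_\alpha=\sum_{k\ge0}w_k(\alpha)\,X_k$, where $X_k$ is the polynomial with $X_k(A)=T_{p^k}$, i.e. the non-backtracking walk operator of length $k$ with its standard correction terms. Then
\begin{equation*}
 \int X_k\,d\nu_p=\delta_{k,0},\qquad
 \Delta_q(\alpha)=\frac1{N_q}\sum_{k\ge1}w_k(\alpha)\,\Tr T_{p^k}\big|_{X_q}.
\end{equation*}
A direct computation with the resolvent $(1-\alpha\lambda/(p+1))^{-1}$ on the tree should give $w_k(\alpha)=C(\alpha)\,\rho(\alpha)^k p^{-k/2}$. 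Here $\rho(\alpha)<1$ is the root of a quadratic, with $1-\rho\asymp c/\log q$ and $C(\alpha)\asymp 1/\log q$ when $\alpha=\alpha_{q,c}$. All the $w_k$ are positive, which gives $\Delta_q\ge0$.

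Next, by vertex transitivity, $\Tr T_{p^k}|_{X_q}=N_q\,\rho_q(k)$, where $\rho_q(k)$ counts the integral quaternions $x=a+2q(bi+cj+dk)$ with
\begin{equation*}
 a^2+4q^2(b^2+c^2+d^2)=p^k
\end{equation*}
(modulo the usual units and sign). This count vanishes for $k<g_q$, apart from the trivial terms $x=\pm p^{k/2}$, which are exactly the tree returns and are already removed by $\delta_{k,0}$. Hence
\begin{equation*}
 N_q\Delta_q=\sum_{k}w_k\sum_{a}r_3\Bigl(\frac{p^k-a^2}{4q^2}\Bigr),
\end{equation*}
where the inner sum runs over $a\ne\pm p^{k/2}$ with the appropriate congruences. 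By Gauss--Siegel, $r_3(n)\asymp\sqrt n\,L(1,\chi_{-n})$ up to bounded local factors. The number of admissible $a$ is about $p^{k/2}/q^2$ in the relevant residue classes, and each term has size about $p^{k/2}/q\cdot L(1,\chi)$. So the $k$-th block is roughly
\begin{equation*}
 w_k\cdot\frac{p^k}{q^3}\cdot\bigl(\text{average of }L(1,\chi_{-(p^k-a^2)/4q^2})\bigr)
 \approx \frac{\rho^k}{\log q}\cdot\frac{p^{k/2}}{q^3}\cdot\langle L\rangle .
\end{equation*}
The factor $p^{k/2}$ must be balanced against the normalization, since the $X_k$ carry $p^{k/2}$. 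After tracking it, summing over $k\ge g_q$ (about $\log q$ relevant values, each of weight $\asymp1/\log q$ because $\rho^k$ decays on the scale $\log q$) should give $N_q\Delta_q\asymp\langle L\rangle$, with the $1/N_q$ scale emerging precisely because $k\approx 4\log_pq$ makes $p^{k/2}\approx q^2$ comparable to $N_q^{2/3}$.

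The lower bounds in (i) and (ii) come from positivity: it suffices to exhibit, for $k$ just above $g_q$ in the right parity, a positive proportion of $a$ for which $(p^k-a^2)/4q^2$ is represented. Then one uses the Siegel-free lower bound for the average of $r_3$ over an arithmetic progression, via sums of three squares averaged over $a$, which needs no individual lower bound on $L(1,\chi)$.

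The main obstacle is the upper bound, namely controlling averages of $L(1,\chi_{D_a})$ over the quadratic family $D_a=-(p^k-a^2)/4q^2$. I will truncate
\begin{equation*}
 L(1,\chi_D)=\sum_{n\le z}\chi_D(n)/n+\text{tail}.
\end{equation*}
The short-sum main term averaged over $a$ is handled by counting solutions of $a^2\equiv p^k$ modulo $n$, which gives $O(1)$. For the tail, under GRH one may take $z=(\log q)^{A}$ with an error $o(1)$, which yields (ii). Unconditionally, Pólya--Vinogradov/Burgess bounds the tail by $O(\log\log q)$ after one takes $z$ a small power of $q$ and uses the $a$-average only for $n\le z$. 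A Siegel-type exceptional zero is absorbed because the bound is only an upper bound, and this gives (i). For (iii) I raise to the $a$-th moment, average over $q\in\Acal(Q)$, and apply Heath-Brown's large sieve for real characters to the tail sums. Since the moduli $D_a$ vary with $q$, the family is spread enough for the $a$-th moments of $\sum_{z<n}\chi_{D}(n)/n$ to be bounded. Uniformity in $c\in I$, including the $\sup$, follows because $w_k(\alpha_{q,c})$ is monotone in $c$ and the estimates depend on $c$ only through $\rho$.
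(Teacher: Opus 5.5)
Your reduction is right in outline. $\Delta_q$ is a positive combination of counts of non-scalar quaternions $a+2q(bi+cj+dk)$ of norm $p^k$, i.e.\ of sums $\sum_a r_3\bigl((p^k-a^2)/4q^2\bigr)$, and discarding the scalar ones is exactly subtracting the tree term. (Note, though, that $\int X_k\,d\nu_p=\delta_{k,0}$ holds for the non-backtracking operators, not for $T_{p^k}$, whose integral is nonzero for even $k$.)

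\textbf{Gap in the unconditional bound (i).} For a given $k$ there are $H\approx p^{k/2}/q^2$ admissible $a$, and $m=(p^k-a^2)/4q^2$ has size up to $H^2q^2$, so up to $q^6$ when $p^{k/2}\le q^4$. Burgess gives cancellation in $\sum_{n\le x}\chi_D(n)$ only for $x\ge|D|^{1/4+\varepsilon}$. Hence neither P\'olya--Vinogradov nor Burgess bounds the tail $\sum_{n>q^{\delta}}\chi_D(n)/n$ by $O(\log\log q)$. If you take $z\ge|D|^{1/4+\varepsilon}\approx(Hq)^{1/2}$ instead, the average over the $H$ values of $a$ controls only $n$ up to about $H^{2}$, because even Weil's bound leaves an error of size $z^{1/2}/H$. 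So nothing is gained when $H\le q^{1/3}$. For those $\asymp\log q$ exponents you are left with $L(1,\chi_D)\ll\log q$, i.e.\ $\Bcal_q(k)\ll\log q$ each, and the total is $N_q\Delta_q\ll\log q$, which is just the trivial bound.

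The $\log\log q$ is not the size of a tail of $L(1,\chi_D)$. It is the harmonic sum $\sum_i 1/(i+1)$ produced by a bound $\Bcal_q(k)\ll\log q/\log(2H)$ valid for \emph{every} $H$, since $H$ is multiplied by $p$ at each step. The missing ingredient is the one-sided inequality
\[
 L(1,\chi_D)\ll\frac{\log|D|}{\log z}\prod_{\ell\le z}\Bigl(1-\frac{\chi_D(\ell)}{\ell}\Bigr)^{-1}\qquad(2\le z\le|D|).
\]
It comes from the Hadamard product: dropping the positive sum $\sum_\rho\Re(\sigma-\rho)^{-1}$ gives $-L'/L(\sigma,\chi_D)\le\frac12\log|D|+O(1)$ on $[1,2]$. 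Integrate this up to $1+1/\log|D|$, then compare with $\zeta$ up to $1+1/\log z$. The local factors of the three-square formula are absorbed by the completely multiplicative $\psi_m$, which equals $1$ at primes dividing $2m$. The short Euler product is then averaged over $h<H$ by an upper-bound sieve, with each factor normalized to a probability of avoiding residue classes; this gives $O(1)$ on average for $z=H^{1/4}$. Expanding the Euler product as a Dirichlet series would not work, since it involves $z$-smooth $n$ far beyond $H$.

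\textbf{Gap in the moment bound (iii).} Heath-Brown's large sieve bounds sums over \emph{all} square-free moduli up to $M$ by roughly $(M+N)\sum|a_n|^2$. Here, for fixed $k$ and $h$ (with $a=|p^{k/2}-2q^2h|$), your moduli $h(p^{k/2}-q^2h)$ are values of a quadratic polynomial in $q$ at only $\asymp Q/\log Q$ primes, of size up to $Q^6$. The resulting bound is off by a power of $Q$. The paper instead takes $z=Q^{1/8}$ and $V=256Q^6$, so $\log V/\log z$ is bounded and $r_3(m_q)\ll\sqrt{m_q}\,E_z(m_q)$ pointwise. It then bounds $\sum_qE_z(m_q)^j$ by a sieve of dimension $j+1$ in $q$, uniformly in $h$ and $p^k$, and finishes with H\"older in $h$ and in $k$.

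\textbf{Two smaller points.} First, under GRH it is the short Euler product at $z=(\log q)^A$ that approximates $L(1,\chi_D)$ (Granville--Soundararajan), not $\sum_{n\le z}\chi_D(n)/n$, which misses the $z$-smooth integers above $z$. This is repairable with the same sieve, using $E_z\ll\log z$ for the $O(\log\log q)$ exponents with $H<z^4$. Second, for the lower bound, exponents near the girth have $O(1)$ admissible $a$ and each contributes only $O(q^{-3}L(1,\chi_D)/\log q)$. Representability there is therefore not enough; you would need $\gg\log q$ exponents with $\Bcal_q(k)\gg1$. The simple route is positivity plus the Ramanujan bound for path lengths $2k$ with $k>K_q$. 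There the eigenvalues $\pm(p+1)$ give $b_{2k}(x,x)\ge(p+1)p^{2k-1}/N_q$, and $\mathcal M_p(\alpha)\sum_{k>K_q}t^{2k}\gg1$ with $t=pu$, since $1-t\asymp1/\log q$.
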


Part~\textup{(iii)} gives the following estimate for almost all admissible levels.
A subset $\mathcal S\subset\Acal$ has relative density one if $|\mathcal S\cap[1,Y]|/|\Acal\cap[1,Y]|\to1$ as $Y\to\infty$.

\begin{corollary}
\label{cor:density-one}
Let $w:[3,\infty)\to[1,\infty)$ be any nondecreasing function with $w(t)\to\infty$.
There is a subset $\Acal(w)\subset\Acal$ of relative density one such that, for all sufficiently large $q\in\Acal(w)$,
\begin{equation}
\label{eq:density-one}
 0\le\Delta_q(\alpha_{q,c})
 \le\frac{w(q)}{N_q}
 \qquad(c\in I).
\end{equation}
More quantitatively, for every $a\ge1$ and $H\ge1$,
\begin{equation}
\label{eq:exceptional-levels}
 \#\left\{q\in\Acal(Q):
 N_q\sup_{c\in I}\Delta_q(\alpha_{q,c})>H\right\}
 \ll_{p,I,a}\frac{Q}{H^a\log Q}.
\end{equation}
\end{corollary}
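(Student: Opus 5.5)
The corollary follows from Theorem~\ref{thm:main-results}\textup{(iii)} by Markov's inequality, followed by a dyadic diagonal argument for the density-one statement. Write $Z_q=N_q\sup_{c\in I}\Delta_q(\alpha_{q,c})$; nonnegativity of $\Delta_q$ (noted before the theorem) gives $Z_q\ge0$ and the lower bound $0\le\Delta_q$ in \eqref{eq:density-one}. For \eqref{eq:exceptional-levels}, fix $a\ge1$ and $H\ge1$. Markov's inequality gives
\begin{equation*}
\#\{q\in\Acal(Q):Z_q>H\}\le H^{-a}\sum_{q\in\Acal(Q)}Z_q^a\ll_{p,I,a}H^{-a}|\Acal(Q)|.
\end{equation*}
Since $|\Acal(Q)|\le\pi(2Q)-\pi(Q)\ll Q/\log Q$, this yields \eqref{eq:exceptional-levels}. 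The admissibility conditions are congruence conditions of positive density by Dirichlet's theorem, so $|\Acal(Q)|\asymp_p Q/\log Q$ for large $Q$. Only the upper bound is needed here, but the lower bound is used below.

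For the density-one set, take $a=1$. Put $\mathcal E=\{q\in\Acal: Z_q>w(q)\}$ and $\Acal(w)=\Acal\setminus\mathcal E$, discarding finitely many small $q$. Then \eqref{eq:density-one} holds on $\Acal(w)$ by definition of $Z_q$. It remains to show $|\mathcal E\cap[1,Y]|=o(|\Acal\cap[1,Y]|)$. Split $[1,Y]$ into dyadic blocks $(Q,2Q]$ with $Q=Y2^{-j}$, $j\ge1$. On a block, $w(q)\ge w(Q)$ by monotonicity, so Markov gives
\begin{equation*}
|\mathcal E\cap(Q,2Q]|\ll\frac{Q}{w(Q)\log Q}.
\end{equation*}

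Now fix $\varepsilon>0$ and cut the range at $Q_0=Y^{1/2}$. Blocks with $Q\le Q_0$ contribute trivially at most $\ll Y^{1/2}$ elements. Blocks with $Q>Q_0$ have $w(Q)\ge w(Y^{1/2})$ and $\log Q\ge\frac12\log Y$, so they contribute
\begin{equation*}
\ll\frac{1}{w(Y^{1/2})\log Y}\sum_j Y2^{-j}\ll\frac{Y}{w(Y^{1/2})\log Y}.
\end{equation*}
Since $|\Acal\cap[1,Y]|\gg_p Y/\log Y$ by the prime number theorem in progressions, the exceptional proportion is $O(1/w(Y^{1/2})+Y^{-1/2}\log Y)\to0$, because $w(t)\to\infty$.

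I see no real obstacle; everything rests on \textup{(iii)}. The only point needing care is the positive-density lower bound for $\Acal$. Admissibility means $q\equiv1\pmod 4$ and $q$ a quadratic nonresidue mod $p$, which is a nonempty union of residue classes mod $4p$ because $p\equiv1\pmod4$. Dirichlet's theorem then gives density $\frac14$ among primes, and the condition $q>2\sqrt p$ excludes only finitely many primes.
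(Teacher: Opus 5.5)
Your proposal is correct and follows the paper's argument: Markov's inequality applied to Theorem~\ref{thm:main-results}\textup{(iii)} gives \eqref{eq:exceptional-levels}, and dyadic summation combined with the monotonicity of $w$ gives the density-one set. The only difference is bookkeeping. The paper sums the dyadic moments into a cumulative bound over $q\le Y$ and uses a fixed threshold $w(Y_0)$ with a double limit, while your block-by-block Markov bound with the moving cutoff $Y^{1/2}$ gives the explicit exceptional proportion $O(1/w(Y^{1/2}))$.
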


The corollary describes the size of the correction for almost all admissible levels: any prescribed factor tending to infinity may replace the double logarithm outside a set of relative density zero.
The bounds also describe PageRank near the source.
For each fixed distance, the excess over the corresponding tree probability is bounded by a constant times $\Delta_q$.
Consequently, the upper bounds and moment estimates hold uniformly over the vertices at that distance.

The arithmetic of the Lubotzky--Phillips--Sarnak construction enters through its path counts.
After the backtracking steps have been summed, $\Delta_q$ is a positive weighted sum of non-backtracking returns.
The adjacency operator is the spherical Hecke operator at $p$, so
these counts are expressed by Hecke polynomials whose degrees grow with the level \cite{LPSHecke1987}.
Quaternionic factorization turns their traces into representation counts by three squares.
The three-square formula then relates these counts to quadratic Dirichlet $L$-values and local factors.
Short Euler products and an upper bound sieve give the estimates in the theorem, while the Ramanujan bound controls the longer paths.

The use of growing Hecke polynomials also appears in the work of Sarnak and Zubrilina on convergence to the Plancherel measure and its applications to non-backtracking walks on arithmetic Ramanujan graphs
\cite{SarnakZubrilina2024}.
Class number averages occur in Zubrilina's work on murmurations
\cite{Zubrilina2025}, related results at prime powers are obtained by
Kundu and M\"uller \cite{KunduMuller2026} and Hua and Mok
\cite{HuaMok2026}.
In our setting, the three-square counts measure the effect of cycles
on the return probability.
There is also a determinant form of the comparison.
The difference between the normalized logarithmic determinant of the finite graph and its tree integral is given by the Ihara zeta function through Bass's formula \cite{Bass1992}.
Differentiating this identity expresses $\Delta_q$ through the
logarithmic derivative of the Ihara zeta function.
These identities illustrate a broader theme in Sarnak's Penn State lectures: trace formulas relate spectral averages to geometric quantities, and the positivity of the geometric counts gives bounds for these averages \cite{Sarnak2025Lectures}.

The paper is organized as follows.
Section~\ref{sec:paths} develops the PageRank formula, the comparison with the tree measure, and the Hecke interpretation of the return counts.
Section~\ref{sec:euler} estimates the three-square representation numbers by short Euler products, both unconditionally and under the GRH.
Section~\ref{sec:prime-euler-moments} proves the required moment
bounds over prime levels.
Section~\ref{sec:main-proof} proves the main results and their consequences near the source.
Appendix~\ref{app:construction} recalls the Lubotzky--Phillips--Sarnak construction.

\section{PageRank, the tree, and arithmetic returns}\label{sec:paths}

\subsection{Personalization and the spectrum}

Let $X$ be a finite connected simple $(p+1)$-regular graph, and let $N=|V(X)|$.
Its adjacency matrix $A_X$ has entry $1$ when two vertices are joined by an edge and entry $0$ otherwise.
The transition matrix is $P_X=A_X/(p+1)$.
We regard probability vectors as columns. Thus, if the present distribution is $v$, the distribution after one step is $P_Xv$.
The symmetry of $A_X$ allows the same matrix to act on functions by averaging their values over neighboring vertices.

For a vertex $x$, the vector $\delta_x$ has value $1$ at $x$ and $0$ elsewhere.
The distribution after $n$ steps from $x$ is $P_X^n\delta_x$.
Averaging over the independent stopping time gives
$$
 \pi_{\alpha,x}
 =(1-\alpha)\sum_{n\ge0}
 \alpha^nP_X^n\delta_x.
$$
Every term $P_X^n\delta_x$ has total mass one, and the weights $(1-\alpha)\alpha^n$ sum to one.
This also shows that the series converges and defines a probability vector.
Multiplication by $I-\alpha P_X$ cancels consecutive terms in the series.
Write $\mathcal R_X(\alpha)=N^{-1}\sum_x\pi_{\alpha,x}(x)$
for the average return probability. Then
\begin{equation}
\label{eq:pagerank-resolvent}
 \pi_{\alpha,x}=(1-\alpha)(I-\alpha P_X)^{-1}\delta_x,
 \qquad
 \mathcal R_X(\alpha)=\frac{1-\alpha}{N}
 \Tr(I-\alpha P_X)^{-1}.
\end{equation}
Here $\Tr$ is the sum of the diagonal entries of a matrix.
The second formula follows by averaging $\pi_{\alpha,x}(x)$ over all starting vertices $x$.
On a vertex-transitive graph, an automorphism carrying one starting vertex to another preserves the walk and its stopping rule.
The return probabilities are therefore equal at all vertices.

More generally, let $v(x)\ge0$ and $\sum_xv(x)=1$.
Choose the starting vertex with probabilities $v(x)$.
Conditioning on that choice gives
\begin{equation*}
 \pi_{\alpha,v}=(1-\alpha)(I-\alpha P_X)^{-1}v
       =\sum_{x\in V(X)}v(x)\pi_{\alpha,x}.
\end{equation*}
This vector satisfies $\pi_{\alpha,v}=(1-\alpha)v+\alpha P_X\pi_{\alpha,v}$.
The two terms describe a restart with distribution $v$ and a random walk step, respectively.
Hence $\pi_{\alpha,v}$ is also the stationary distribution of the walk with these restarts \cite{Gleich2015}.
If $v$ is uniform, then $P_Xv=v$, and the formula gives $\pi_{\alpha,v}=v$.

We can also compute PageRank one eigenvector at a time.
Since $A_X$ is real and symmetric, it has an orthonormal basis of eigenvectors $e_1,\ldots,e_N$, with real eigenvalues $\lambda_1,\ldots,\lambda_N$.
Writing $v=\sum_{j=1}^Na_je_j$ gives $\pi_{\alpha,v}
 =\sum_{j=1}^N \frac{1-\alpha}
{1-\alpha\lambda_j/(p+1)}\,a_je_j$.
Thus each coefficient is multiplied by the displayed factor.
Constant functions have eigenvalue $p+1$, and their coefficients are unchanged.
On a bipartite graph, the function taking values $1$ on one part and $-1$ on the other has eigenvalue $-(p+1)$.
Its coefficient is multiplied by $(1-\alpha)/(1+\alpha)$.

Suppose now that $X$ is Ramanujan, and let $\rho_p=2\sqrt p/(p+1)<1$.
All remaining eigenvalues satisfy $|\lambda_j|/(p+1)\le\rho_p$.
For a vector $f$, let $E_{\mathrm{nt}}f$ be the sum of its eigenvector terms with eigenvalues other than $\pm(p+1)$.
For each of these terms, the multiplier is at most $(1-\alpha)/(1-\alpha\rho_p)$.
Squaring the coefficients and summing therefore proves
\begin{equation*}
 \|E_{\mathrm{nt}}\pi_{\alpha,v}\|_2
 \le \frac{1-\alpha}{1-\alpha\rho_p}
             \|E_{\mathrm{nt}}v\|_2,
\end{equation*}
where $\|f\|_2^2=\sum_{y\in V(X)}|f(y)|^2$.
The bound is independent of $N$.
For $\alpha=\alpha_{q,c}$, its multiplier is at most $c_1/((1-\rho_p)\log q)$, uniformly for $c\in I$.

\subsection{The regular tree}
A tree is a connected graph with no cycles.
In the infinite $(p+1)$-regular tree, fix a root $o$.
The root has $p+1$ neighbors.
Every other vertex has one neighbor closer to $o$ and $p$ neighbors farther from $o$.
There is exactly one path without repeated vertices between any two vertices.

We first count paths on a finite $(p+1)$-regular graph $X$.
A path is called non-backtracking if it never traverses an edge and then immediately traverses the same edge in reverse.
Vertices may be repeated.
For a closed path, this condition applies to consecutive edges in the linear list, the last and first edges are not compared.
Let $b_n(x,y)$ be the number of these paths of length $n$ from $x$ to $y$, and write $B_n=(b_n(x,y))_{x,y}$.
Reversing a path shows that $B_n$ is symmetric.

The following change of variable will express PageRank in terms of these path counts.
For $0<\alpha<1$, choose $u\in(0,p^{-1})$ so that
\begin{equation}
\label{eq:alpha-u}
 \alpha=\frac{(p+1)u}{1+pu^2},
 \qquad D_X(u)=I-uA_X+pu^2I.
\end{equation}
The function on the right increases from $0$ to $1$ on this interval: its derivative is $(p+1)(1-pu^2)/(1+pu^2)^2>0$.
Thus $u$ exists and is unique.
This choice gives $I-\alpha P_X=D_X(u)/(1+pu^2)$.

\begin{lemma}
\label{lem:path-series}
For $0<\alpha<1$ and $u$ as in \eqref{eq:alpha-u},
\begin{equation}\label{eq:path-series}
 \pi_{\alpha,x}(y)=\mathcal M_p(\alpha)
       \sum_{n\ge0}b_n(x,y)u^n,
 \qquad
 \mathcal M_p(\alpha)
       =(1-\alpha)\frac{1+pu^2}{1-u^2}.
\end{equation}
On the infinite regular tree, the corresponding probability at distance $r$ from the source is $\mathcal M_p(\alpha)u^r$.
\end{lemma}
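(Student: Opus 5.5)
The plan is to use the classical generating function identity for non-backtracking walks on a $(p+1)$-regular graph, namely
\begin{equation*}
 (1-u^2)\,D_X(u)^{-1}=\sum_{n\ge0}B_nu^n,
\end{equation*}
valid for $|u|$ small. It follows from the recursion $B_0=I$, $B_1=A_X$, $B_2=A_X^2-(p+1)I$, and $B_{n+1}=A_XB_n-pB_{n-1}$ for $n\ge2$. This recursion comes from extending a non-backtracking path of length $n$ by one edge and subtracting the backtracking extensions. There are $p$ such extensions once $n\ge2$, since the last edge is forbidden. At $n=1$ there are $p+1$, because every length-one path can be reversed.

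Multiplying the series $F(u)=\sum_n B_nu^n$ by $I-uA_X+pu^2I$ and using the recursion, everything cancels except the terms of order $0$ and $2$. This gives $D_X(u)F(u)=I+(B_2-A_X^2+pI)u^2=(1-u^2)I$.

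Next I will deal with convergence on the whole range $0<u<p^{-1}$. Clearly $b_n(x,y)\le(p+1)p^{n-1}$, so the series converges entrywise for $u<1/p$. On the analytic side, $D_X(u)$ is invertible there: its eigenvalues are $1-u\lambda_j+pu^2\ge 1-(p+1)u+pu^2=(1-u)(1-pu)>0$. Both sides of the identity are analytic in $u$ on this disc-interval, so the identity extends from small $u$ to all of it.

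Combining with \eqref{eq:pagerank-resolvent} and $I-\alpha P_X=D_X(u)/(1+pu^2)$, we get
\begin{equation*}
 \pi_{\alpha,x}=(1-\alpha)(1+pu^2)D_X(u)^{-1}\delta_x=(1-\alpha)\frac{1+pu^2}{1-u^2}\sum_n B_nu^n\delta_x.
\end{equation*}
Reading off the $y$-entry gives \eqref{eq:path-series}, since the $(y,x)$ entry is $b_n(y,x)=b_n(x,y)$ by symmetry.

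For the tree, I will run the same argument on the infinite tree, where $A$ is a bounded operator on $\ell^2$ or $\ell^\infty$ with norm at most $p+1$. The resolvent formula $\pi_{\alpha,o}=(1-\alpha)\sum\alpha^nP^n\delta_o$ still holds, and the recursion for $B_n$ is purely local, so it holds verbatim. The series $\sum B_nu^n\delta_o$ converges absolutely in $\ell^1$ for $u<1/p$, so the formal cancellation is justified as an operator identity on $\ell^1$. On the tree there is exactly one non-backtracking path from $o$ to a vertex at distance $r$, namely the geodesic, and it has length $r$. Hence $\sum_n b_n(o,y)u^n=u^r$, and the probability is $\mathcal M_p(\alpha)u^r$. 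Taking $r=0$ identifies $\mathcal M_p(\alpha)$ as the tree return probability, consistent with its earlier use as $\int f_\alpha\,d\nu_p$.

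The main obstacle I expect is the convergence and invertibility bookkeeping on the full interval $u<1/p$, especially on the infinite tree. There I will use $\ell^1$ absolute convergence rather than spectral arguments: $\|\alpha P\|_{\ell^1\to\ell^1}=\alpha<1$ makes $(I-\alpha P)^{-1}$ a convergent Neumann series. Applying $I-\alpha P$ to the non-backtracking series then verifies the identity directly.
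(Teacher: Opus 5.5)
Your proof is correct. For the finite graph it is essentially the paper's argument: the recursion for $B_n$, the generating function $(1-u^2)D_X(u)^{-1}$, convergence from the count $(p+1)p^{n-1}$, and then the resolvent formula. Your separate check that $D_X(u)$ is invertible is harmless, though it already follows from $D_X(u)F(u)=(1-u^2)I$ for a finite matrix. Your sentence about $p$ versus $p+1$ extensions is slightly garbled. Those numbers count the backtracking extensions of length $n+1$ lying over each non-backtracking path of length $n-1$. The recursion you state is nevertheless right.

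The tree statement is where your route differs from the paper's. The paper does not rerun the generating function on the tree. It guesses $f(y)=\mathcal M_p(\alpha)u^{d(o,y)}$ and checks $\sum_y f(y)=1$ using $\mathcal M_p(\alpha)=(1-pu)/(1+u)$. It then checks $f=(1-\alpha)\delta_o+\alpha Pf$ separately at the root and off it, and concludes by uniqueness of probability solutions via the $\ell^1$ contraction bound $2\alpha^n$.

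You instead apply the same identity on the tree. You justify it by the $\ell^1$ absolute convergence of $\sum_n B_nu^n\delta_o$ and the Neumann series for $(I-\alpha P)^{-1}$. You then use that non-backtracking walks on a tree are geodesics, so $b_n(o,y)=\mathbf 1_{n=r}$. This deserves one line of justification: at each step the only neighbor closer to the start is the previous vertex.

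Your route treats finite and infinite $(p+1)$-regular graphs uniformly. It also shows why the tree value is exactly the geodesic term, which is the mechanism behind the paper's covering remark after the proof. The paper's route is a self-contained algebraic verification that avoids operator bookkeeping on $\ell^1$, and it produces the closed form $\mathcal M_p(\alpha)=(1-pu)/(1+u)$ along the way. The analytic input is the same in both: your injectivity of $I-\alpha P$ on $\ell^1$ is precisely the paper's uniqueness step.
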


\begin{proof}
There is one path of length zero from a vertex to itself, so $B_0=I$, and $B_1=A_X$.
Among all paths of length two, the forbidden ones go from a vertex to a neighbor and immediately back.
There are $p+1$ of them at each vertex. Hence $B_2=A_X^2-(p+1)I$.

For $n\ge2$, the entry $(A_XB_n)(x,y)$ counts paths obtained by adding an edge $x\to z$ before a non-backtracking path from $z$ to $y$.
An immediate reversal can occur only at the new join.
Such a path starts $x\to z\to x$.
Deleting these two edges leaves a non-backtracking path of length
$n-1$ from $x$ to $y$.
For each remaining path, $z$ can be any neighbor of $x$ except its first neighbor.
There are exactly $p$ choices.
Subtracting the forbidden paths gives
$$
 B_{n+1}=A_XB_n-pB_{n-1}\qquad(n\ge2).
$$

Let $F(u)=\sum_{n\ge0}B_nu^n$.
In the product $D_X(u)F(u)$, the constant coefficient is $I$, the coefficient of $u$ is zero, and the coefficient of $u^2$ is $-I$.
The recurrence makes every later coefficient zero.
Therefore
\begin{equation}
\label{eq:nb-generating}
 \sum_{n\ge0}B_nu^n=(1-u^2)D_X(u)^{-1}.
\end{equation}
These manipulations are valid for $u<p^{-1}$.
Indeed, the number of non-backtracking paths of length $n\ge1$ from a fixed vertex is $(p+1)p^{n-1}$: there are $p+1$ choices for the first step and $p$ for every later step.
The resulting geometric series converges.
Combining \eqref{eq:nb-generating} with \eqref{eq:pagerank-resolvent} and \eqref{eq:alpha-u} proves the finite-graph formula.

On the tree, let $f(y)=\mathcal M_p(\alpha)u^{d(o,y)}$.
There are $(p+1)p^{r-1}$ vertices at distance $r\ge1$, so
$$
 \sum_yf(y)=\mathcal M_p(\alpha)
\left(1+\frac{(p+1)u}{1-pu}\right)
=1.
$$
For the last equality, \eqref{eq:alpha-u} gives $\mathcal M_p(\alpha)=(1-pu)/(1+u)$.
At positive distance, the neighbor counts and the identity $u^r=\alpha(u^{r-1}+pu^{r+1})/(p+1)$ show that $f=\alpha P f$.
At the root, the required equation is
$\mathcal M_p=(1-\alpha)+\alpha\mathcal M_p u$, which follows from the same formulas.
Hence $f=(1-\alpha)\delta_o+\alpha P f$ on the whole tree.
The stopped-walk distribution satisfies this equation as well.
Two probability solutions agree: a random walk step cannot increase the sum of the absolute values of their difference, and iterating the equation bounds that sum by $2\alpha^n$ for every $n$.
Thus $f$ is the stopped-walk distribution on the tree.
Its value at the root is $\mathcal M_p(\alpha)$.
\end{proof}

One can see directly how the tree covers $X$.
Fix $x\in V(X)$ and make a new vertex for every non-backtracking path starting at $x$, including the empty path.
Join a path to each path obtained by adding one allowed step.
The empty path is the root.
Every other path has one predecessor and $p$ extensions, so this graph is the $(p+1)$-regular tree.
Sending each path to its endpoint in $X$ maps the neighbors of each tree vertex bijectively to the neighbors of its image.
Thus every walk on $X$ starting at $x$ lifts uniquely to a walk on the tree starting at the root.
The PageRank mass at $y$ is the sum of the tree masses over all vertices mapping to $y$.
A shortest path of length $r=d(x,y)$ supplies one such vertex.
It contributes $\mathcal M_p(\alpha)u^r$.
All other contributions are nonnegative.
For $y=x$, they give the excess return probability.

We finish by recording the size of $u$ in the range used in the main theorem.
Let $t=pu$.
Substitution into \eqref{eq:alpha-u} and \eqref{eq:path-series} gives
\begin{equation}
\label{eq:critical-relations}
 1-\alpha=\frac{(1-t)(1-t/p)}{1+t^2/p},
 \qquad \mathcal M_p(\alpha)=\frac{1-t}{1+t/p}.
\end{equation}
Write $\varepsilon=1-\alpha$ and $s=1-t$.
The factor $(1-t/p)/(1+t^2/p)$ is bounded above and below by positive constants for $0\le t\le1$, so $s=O_p(\varepsilon)$.
Expanding this factor at $t=1$ then gives
$$
 \varepsilon=\frac{p-1}{p+1}s+O_p(s^2),
 \qquad
 s=\frac{p+1}{p-1}
 \varepsilon+O_p(\varepsilon^2).
$$
With $\varepsilon=c/\log q$, we obtain
\begin{equation}
\label{eq:critical-t}
 t=1-\frac{\tau(c)}{\log q}+O_{p,I}((\log q)^{-2}),
 \qquad \tau(c)=\frac{p+1}{p-1}c.
\end{equation}
The error is uniform for $c\in I$, since $c_0\le c\le c_1$.
In particular, $1-t\asymp_{p,I}1/\log q$ and $\mathcal M_p(\alpha_{q,c})\asymp_{p,I}1/\log q$.

\subsection{The McKay measure and the Ihara zeta function}
\label{sec:spectral}

Let $\mu_X=N^{-1} \sum_{j=1}^N\delta_{\lambda_j}$ be the probability measure that puts mass $1/N$ at each adjacency eigenvalue, counted with multiplicity.
The spectral measure of the regular tree at its root is the McKay measure
\begin{equation*}
 d\nu_p(\lambda)=
 \frac{(p+1)\sqrt{4p-\lambda^2}}
{2\pi((p+1)^2-\lambda^2)}\,d\lambda
 \qquad (|\lambda|\le2\sqrt p),
\end{equation*}
with density zero elsewhere \cite{McKay1981,Sarnak2025Lectures}.
Thus $\int\lambda^n\,d\nu_p(\lambda)$ counts walks of length $n$ from the root back to itself.

For $f_\alpha(\lambda) =(1-\alpha)/(1-\alpha\lambda/(p+1))$, expanding the resolvent in powers of $\alpha$ gives
\begin{equation}
\label{eq:mckay-pagerank}
 \mathcal R_X(\alpha)=\int f_\alpha(\lambda)\,d\mu_X(\lambda),
 \qquad
 \mathcal M_p(\alpha)=\int f_\alpha(\lambda)\,d\nu_p(\lambda).
\end{equation}

For $X=X_q$, the excess $\Delta_q (\alpha)$ is therefore the difference between these two spectral averages.
Theorem~\ref{thm:main-results} estimates this difference for the functions $f_{\alpha_{q,c}}$ as $q$ grows.

There is also an exact relation with a graph zeta function.
A prime cycle is an oriented closed path with no immediate reversal, including at the join between its last and first edges, which is not a proper power of a shorter such path.
Identify cycles under cyclic shifts, with the two orientations kept separate.
The Ihara zeta function and Bass's determinant formula \cite{Bass1992} are
\begin{equation}
\label{eq:ihara-bass}
 Z_X(u)=\prod_{[C]}(1-u^{|C|})^{-1},
 \qquad
 Z_X(u)^{-1}=(1-u^2)^{N(p-1)/2}\det D_X(u).
\end{equation}
Here the product runs over prime cycles and converges for $0<u<p^{-1}$.

\begin{proposition}
\label{prop:spectral-determinant}
For $0<\alpha<1$, let
$$
 F_X(\alpha)=\frac1N\log\det(I-\alpha P_X),
 \qquad
 F_T(\alpha)=
 \int\log\left(1-
 \frac{\alpha\lambda}{p+1}\right)
  d\nu_p(\lambda).
$$
If $u$ is given by \eqref{eq:alpha-u}, then
\begin{equation}
\label{eq:determinant-mckay}
 F_X(\alpha)-F_T(\alpha)
 =-\frac1N\log Z_X(u)
\end{equation}
and
\begin{equation}
\label{eq:ihara-excess}
 \mathcal R_X(\alpha)-\mathcal M_p(\alpha)
 =\frac{(1-\alpha)
 (1+pu^2)u}{N(1-pu^2)}
\frac{Z_X'(u)}{Z_X(u)}.
\end{equation}
\end{proposition}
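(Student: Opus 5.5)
The plan is to prove \eqref{eq:determinant-mckay} by splitting it into a finite-graph identity and a tree identity, and then to obtain \eqref{eq:ihara-excess} by differentiating \eqref{eq:determinant-mckay} in $\alpha$.

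\emph{The finite graph.} By \eqref{eq:alpha-u}, $I-\alpha P_X=D_X(u)/(1+pu^2)$, so
$$F_X(\alpha)=-\log(1+pu^2)+\tfrac1N\log\det D_X(u).$$
Taking logarithms in Bass's formula \eqref{eq:ihara-bass} gives
$$\tfrac1N\log\det D_X(u)=-\tfrac1N\log Z_X(u)-\tfrac{p-1}2\log(1-u^2).$$
Hence
$$F_X(\alpha)=G(u)-\tfrac1N\log Z_X(u),\qquad G(u)=-\log(1+pu^2)-\tfrac{p-1}2\log(1-u^2).$$
It therefore suffices to show that $F_T(\alpha)=G(u)$.

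\emph{The tree identity.} Both sides vanish at $\alpha=0$, where $u=0$. So it is enough to compare their derivatives in $\alpha$.
\begin{itemize}
\item Since $\nu_p$ is supported on $|\lambda|\le2\sqrt p<p+1$, we may differentiate under the integral sign. Using $\frac{x}{1-\alpha x}=\frac1\alpha\bigl(\frac1{1-\alpha x}-1\bigr)$ and \eqref{eq:mckay-pagerank},
$$F_T'(\alpha)=-\frac1\alpha\Bigl(\frac{\mathcal M_p(\alpha)}{1-\alpha}-1\Bigr).$$
\item By Lemma~\ref{lem:path-series}, $\mathcal M_p/(1-\alpha)=(1+pu^2)/(1-u^2)$. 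Together with $\alpha=(p+1)u/(1+pu^2)$, this gives
$$F_T'(\alpha)=-\frac{u(1+pu^2)}{1-u^2}.$$
\item A direct computation gives
$$\frac{dG}{du}=-\frac{(p+1)u(1-pu^2)}{(1+pu^2)(1-u^2)}.$$
\item The derivative computed in the text after \eqref{eq:alpha-u} gives
$$\frac{du}{d\alpha}=\frac{(1+pu^2)^2}{(p+1)(1-pu^2)}.$$
\end{itemize}
Multiplying the last two expressions yields exactly $-u(1+pu^2)/(1-u^2)$, which equals $F_T'(\alpha)$. This proves $F_T=G$ and hence \eqref{eq:determinant-mckay}.

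\emph{The excess formula.} The same resolvent computation applied to $X$, now using \eqref{eq:pagerank-resolvent} and $\frac{d}{d\alpha}\log\det(I-\alpha P_X)=-\Tr\bigl(P_X(I-\alpha P_X)^{-1}\bigr)$, gives
$$F_X'(\alpha)=-\frac1\alpha\Bigl(\frac{\mathcal R_X(\alpha)}{1-\alpha}-1\Bigr).$$
Subtracting the tree formula, we get
$$F_X'-F_T'=-\frac{\mathcal R_X-\mathcal M_p}{\alpha(1-\alpha)}.$$
Differentiating the right side of \eqref{eq:determinant-mckay} gives $-\frac1N\frac{Z_X'(u)}{Z_X(u)}\frac{du}{d\alpha}$. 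Equating the two expressions and substituting $\alpha=(p+1)u/(1+pu^2)$ and the formula for $du/d\alpha$, the factor $(p+1)$ and one factor of $(1+pu^2)$ cancel. What remains is exactly \eqref{eq:ihara-excess}.

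The only point requiring care is the tree identity $F_T=G$. Computing the McKay-measure integral of the logarithm directly would be messy. The derivative argument avoids this, since it only uses the already established value of $\mathcal M_p$. The remaining analytic justifications are routine: convergence of $Z_X$ for $0<u<p^{-1}$, and differentiability of $\log Z_X$ there.
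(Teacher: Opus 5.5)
Your proposal is correct and follows essentially the same route as the paper. It rewrites $F_X$ via $I-\alpha P_X=D_X(u)/(1+pu^2)$ and Bass's formula, identifies $F_T$ with $-\log(1+pu^2)-\frac{p-1}{2}\log(1-u^2)$ by differentiating with $\mathcal M_p=(1-\alpha)(1+pu^2)/(1-u^2)$ and using $F_T(0)=0$, and then derives \eqref{eq:ihara-excess} from $\mathcal R_X-\mathcal M_p=-\alpha(1-\alpha)\frac{d}{d\alpha}(F_X-F_T)$ together with $\alpha\,du/d\alpha=u(1+pu^2)/(1-pu^2)$. The only cosmetic difference is that the paper integrates $dF_T/du$ directly, whereas you verify a candidate $G$ read off from Bass's formula; all of your derivative computations check out.
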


\begin{proof}
Differentiating the finite eigenvalue sum and the tree integral in \eqref{eq:mckay-pagerank} gives
$$
 \mathcal R_X(\alpha)=(1-\alpha)(1-\alpha F_X'(\alpha)),
\qquad
 \mathcal M_p(\alpha)=(1-\alpha)(1-\alpha F_T'(\alpha)).
$$
For the integral, differentiation is valid since $|\lambda|\le2\sqrt p<p+1$.
Using \eqref{eq:path-series} and differentiating \eqref{eq:alpha-u}, we obtain
$$
 \frac{d}{du}F_T(\alpha(u))
 =-\frac{(p+1)u(1-pu^2)}
 {(1-u^2)(1+pu^2)}
 =-\frac{2pu}{1+pu^2}
 +\frac{(p-1)u}{1-u^2}.
$$
Since $F_T(0)=0$, integration gives
$$
 F_T(\alpha(u))
 =-\log(1+pu^2)-\frac{p-1}{2}
 \log(1-u^2).
$$
On the finite graph, \eqref{eq:alpha-u} gives
$$
 F_X(\alpha(u))
 =\frac1N\log\det
 D_X(u)-\log(1+pu^2).
$$
Subtracting and applying \eqref{eq:ihara-bass} proves \eqref{eq:determinant-mckay}.
Finally, the two derivative identities above give
$$
 \mathcal R_X(\alpha)-\mathcal M_p(\alpha)
 =-\alpha(1-\alpha)
 \frac{d}{d\alpha}
 (F_X(\alpha)-F_T(\alpha)).
$$
Differentiate \eqref{eq:determinant-mckay} and use
$\alpha\,du/d\alpha=u(1+pu^2)/(1-pu^2)$ to obtain \eqref{eq:ihara-excess}.
\end{proof}

\subsection{Hecke operators and quaternionic counts}
For the rest of the paper, $X=X_q$ is the graph in the introduction.
Write $A_q=A_{X_q}$.
The construction in Appendix \ref{app:construction} realizes it as an arithmetic quotient of the Bruhat--Tits tree.
With the maximal compact subgroup $\PGL_2(\mathbb Z_p)$ assigned volume one, $A_q$ is the spherical Hecke operator at $p$.
Define its complete Hecke operators by
\begin{equation}
\label{eq:complete-hecke}
 \mathcal T_n
 =\sum_{0\le j\le n/2}B_{n-2j}.
\end{equation}
They satisfy $\mathcal T_0=I$, $\mathcal T_1=A_q$, and
$$
 A_q\mathcal T_n
 =\mathcal T_{n+1}+p\mathcal T_{n-1}\quad(n\ge1),
 \qquad
 \sum_{n\ge0}\mathcal T_nu^n
 =D_{X_q}(u)^{-1}.
$$
These are the usual Hecke relations
\cite[Lemmas~2.2--2.3]{LPSHecke1987}.
On an eigenspace with eigenvalue
$\lambda=\sqrt p(\beta+\beta^{-1})$, the last resolvent acts by
$((1-\sqrt p\beta u)(1-\sqrt p\beta^{-1}u))^{-1}$.
For a nontrivial automorphic constituent, this is its local standard Euler factor under the substitution $p^{-s}=\sqrt p\,u$.
The automorphic realization is given in \cite[Theorem~4.1 and its proof, pp.~418--419]{LPSHecke1987}.

Let $r_3(m)=\#\{(b_1,b_2,b_3)
\in\mathbb Z^3: b_1^2+b_2^2+b_3^2=m\}$ for $m\ge1$.
For positive integers $P,q$, let
\begin{equation}
\label{eq:arithmetic-sums}
 S(P,q)
 =\sum_{1\le h<P/q^2}
 r_3\bigl(h(P-q^2h)\bigr),
\qquad
\Bcal_q(k)=\frac{q^3}{p^{2k}}S(p^k,q).
\end{equation}
The variable $h$ is integral, and an empty sum is zero.

\begin{lemma}
\label{lem:quaternion-count}
For every admissible $q$ and integer $k\ge1$,
\begin{equation}
\label{eq:quaternion-count}
 \mathcal T_{2k}(x,x)
 =1+S(p^k,q),
 \qquad
 b_{2k}(x,x)
 =S(p^k,q)-S(p^{k-1},q).
\end{equation}
In particular, $0\le b_{2k}(x,x)\le S(p^k,q)$.
\end{lemma}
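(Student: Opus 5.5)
The proof computes the diagonal of the complete Hecke operator $\mathcal T_{2k}$ as a quaternion count, following the Lubotzky--Phillips--Sarnak analysis recalled in Appendix~\ref{app:construction} and \cite{LPSHecke1987}. It then rewrites that count as a sum of three-square numbers by factoring a difference of squares. The statement about $b_{2k}(x,x)$ follows from \eqref{eq:complete-hecke}.

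\emph{Step 1: $\mathcal T_n(x,x)$ as a quaternion count.} In the construction, a vertex is an element of $\PGL_2(\F_q)$. Edges are right multiplication by the images of the $p+1$ integral quaternions $a_0+a_1i+a_2j+a_3k$ of norm $p$ with $a_0>0$ odd and $a_1,a_2,a_3$ even. By unique factorization in the Lipschitz order (the LPS factorization lemma), the integral quaternions of norm $p^n$ with $a_0$ odd and $a_1,a_2,a_3$ even, taken modulo $\pm1$, correspond bijectively to pairs $(j,w)$ with $0\le j\le n/2$ and $w$ a reduced word of length $n-2j$ in the generators, via $\alpha=p^jw$. Reading words as non-backtracking paths from $x$, \eqref{eq:complete-hecke} shows that $\mathcal T_n(x,x)$ counts those $\alpha$, modulo $\pm1$, whose image in $\PGL_2(\F_q)$ is trivial. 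Since $q$ is odd, this means $\alpha\equiv$ scalar mod $q$. Combined with the parity condition, the requirement is $a_1,a_2,a_3\equiv0\pmod{2q}$. Hence
\[
 \mathcal T_n(x,x)=\tfrac12\#\{(a_0,b_1,b_2,b_3)\in\mathbb Z^4:\ a_0^2+4q^2(b_1^2+b_2^2+b_3^2)=p^n\},
\]
and $a_0$ is then automatically odd. I expect this step to be the main obstacle. One must check carefully against the appendix that the non-primitive quaternions $p^jw$ are exactly what makes \eqref{eq:complete-hecke} sum over all $B_{n-2j}$. One must also check that the scalar condition mod $q$ for elements of norm $p^n$ is exactly the divisibility of $a_1,a_2,a_3$ by $q$; here we use that $\left(\frac pq\right)=-1$ and that $q$ is prime.

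\emph{Step 2: the three-square sum for $n=2k$.} Write $p^{2k}-a_0^2=(p^k-a_0)(p^k+a_0)=4q^2m$ with $m=b_1^2+b_2^2+b_3^2$. The terms $a_0=\pm p^k$ give $m=0$, contribute $2$, and yield the $1$ after halving. Otherwise $|a_0|<p^k$ and both factors are even and positive. Since $q\nmid p$, $q$ cannot divide both factors, so $q^2$ divides exactly one of them. Suppose $q^2\mid p^k-a_0$ and write $p^k-a_0=2q^2h$. Then $p^k+a_0=2(p^k-q^2h)$, $m=h(p^k-q^2h)$, and $1\le h<p^k/q^2$. This parametrization is bijective onto such $a_0$, and the number of $(b_1,b_2,b_3)$ is $r_3(m)$. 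The case $q^2\mid p^k+a_0$ is its image under $a_0\mapsto-a_0$ and gives the same total. Halving yields $\mathcal T_{2k}(x,x)=1+S(p^k,q)$.

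\emph{Step 3: the non-backtracking count.} By \eqref{eq:complete-hecke}, $B_n=\mathcal T_n-\mathcal T_{n-2}$ for $n\ge2$. For $k\ge2$ this gives $b_{2k}(x,x)=S(p^k,q)-S(p^{k-1},q)$. For $k=1$ we use $\mathcal T_0=I$ together with $S(p^0,q)=0$, which holds because the range $1\le h<1/q^2$ is empty. Finally, $b_{2k}(x,x)\ge0$ because it counts paths, and $S(p^{k-1},q)\ge0$ because it is a sum of values of $r_3$. Therefore $0\le b_{2k}(x,x)\le S(p^k,q)$.
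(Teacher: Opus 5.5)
Your proposal is correct and follows essentially the same route as the paper. Both arguments use LPS factorization to identify $\mathcal T_{2k}(x,x)$ with quaternions $p^jw$ having $a_1,a_2,a_3\equiv0\pmod{2q}$, then split $(p^k-a_0)(p^k+a_0)=4q^2m$ using that $q^2$ divides exactly one factor, and finish with $B_{2k}=\mathcal T_{2k}-\mathcal T_{2k-2}$. The only differences are cosmetic: you count both signs of $a_0$ and halve, whereas the paper fixes $a_0>0$ and works with $d_\pm=(P\pm a_0)/2$; also, the scalar criterion modulo $q$ needs only $q$ odd and $q\ne p$, not $\left(\frac pq\right)=-1$.
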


\begin{proof}
A word of length $n$ in the Lubotzky--Phillips--Sarnak generators describes a path of length $n$.
Quaternionic factorization identifies non-backtracking words with primitive
integral quaternions
$$
 a=a_0+a_1i+a_2j+a_3k,
 \qquad
 \operatorname{Nrd}(a)=p^n,
$$
up to an overall sign, see \cite[Lemma~2.3]{LPSHecke1987}.
Here $a_0$ is odd, the other three coordinates are even, and primitive
means that the four coordinates are not all divisible by $p$.
Since $a_0$ is odd, it is nonzero.
We choose the unique overall sign for which $a_0>0$.

Every integral quaternion $a$ of norm $p^{2k}$ with these parity conditions
has a unique expression $a=p^j a'$ when $0\le j\le k$, where $a'$ is primitive.
Indeed, $j$ is the largest exponent such that $p^j$ divides all four coordinates.
Dividing by $p^j$ preserves the parity
conditions and the positivity of the real coordinate, and gives
$\operatorname{Nrd}(a')=p^{2k-2j}$.
Thus $a'$ corresponds to a non-backtracking path of length $2k-2j$.
Conversely, multiplying the primitive quaternion of such a path by $p^j$
gives a quaternion of norm $p^{2k}$.
This explains the decomposition
$\mathcal T_{2k}=B_{2k}+B_{2k-2}+\cdots+B_0$ in \eqref{eq:complete-hecke}.

We first count returns at the identity vertex of $\PGL_2(\F_q)$.
Let $\rho_q(a)$ denote the matrix obtained by reducing the quaternion
coordinates modulo $q$ and applying the matrix map in the Lubotzky--Phillips--Sarnak construction.
The endpoint of the associated path is the projective class $[\rho_q(a)]$.
Since the identity class in $\PGL_2(\F_q)$ consists of the nonzero
scalar matrices, the path returns precisely when $\rho_q(a)=cI$ for some $c\in\F_q^\times$.
The matrix is invertible because its determinant is the quaternion norm,
which is a power of $p$, and $q\ne p$.
Multiplication or division by $p$ changes this matrix by a nonzero scalar
modulo $q$, so it preserves the return condition.

It follows that $\mathcal T_{2k}(x,x)$ counts all integral quaternions
of norm $p^{2k}$ with positive real coordinate, the stated parity
conditions, and scalar image modulo $q$.
The scalar quaternion $p^k$ is included, its primitive part is $1$,
which corresponds to the empty path.
Left translation is a graph automorphism, so the same count holds
at every vertex $x$.

Write $P=p^k$.
The quaternion matrix map shows that the scalar condition is equivalent to
$a_1\equiv a_2\equiv a_3\equiv0\pmod q$.
Since these coordinates are also even and $q$ is odd, we may write $a_i=2qb_i$ for $i=1,2,3$.
The norm equation becomes $a_0^2+4q^2(b_1^2+b_2^2+b_3^2)=P^2$,
here $a_0>0$ odd.
There is exactly one scalar quaternion in this set: $a_0=P$ and $b_1=b_2=b_3=0$.
It accounts for the term $1$ in the first identity.

For every remaining quaternion, $0<a_0<P$.
Let $d_-=\frac{P-a_0}{2}$, $d_+=\frac{P+a_0}{2}$.
These are positive integers satisfying
$d_-+d_+=P$, $d_-d_+=q^2(b_1^2+b_2^2+b_3^2)$.
Since $q\nmid P$, the prime $q$ cannot divide both $d_-$ and $d_+$.
Their product is divisible by $q^2$, so exactly one of them is divisible
by $q^2$.
Write that factor as $q^2h$.
The other factor is then $P-q^2h$, and their positivity gives $0<h<\frac{P}{q^2}$.
Moreover,
$$
 a_0=|d_+-d_-|=|P-2q^2h|,
 \qquad
 b_1^2+b_2^2+b_3^2=h(P-q^2h).
$$
The integer $h$ is uniquely determined, since exactly one of
$d_-$ and $d_+$ is divisible by $q^2$.

Conversely, take an integer $h$ with $0<h<P/q^2$ and an integer triple
satisfying $b_1^2+b_2^2+b_3^2 =h(P-q^2h)$.
Let $a_0=|P-2q^2h|$, $a_i=2qb_i$ with $i=1,2,3$.
The number $a_0$ is odd and nonzero, since $P$ is odd.
The bounds on $h$ give $a_0<P$, and direct substitution gives $a_0^2+a_1^2+a_2^2+a_3^2=P^2$.
Thus these coordinates define a quaternion with the required positivity, parity, norm, and scalar image modulo $q$.
The uniqueness of $h$ shows that each quaternion is counted once.
Consequently,
$$
 \mathcal T_{2k}(x,x)
 =1+\sum_{1\le h<P/q^2}r_3\bigl(h(P-q^2h)\bigr)
 =1+S(P,q).
$$
This proves the first identity.

Finally, \eqref{eq:complete-hecke} gives $B_{2k}=\mathcal T_{2k}-\mathcal T_{2k-2}$.
Taking diagonal entries and applying the first identity yields
$$
 b_{2k}(x,x)
 =\bigl(1+S(p^k,q)\bigr)
  -\bigl(1+S(p^{k-1},q)\bigr)
 =S(p^k,q)-S(p^{k-1},q).
$$
The lower bound follows because $b_{2k}(x,x)$ counts paths.
The upper bound follows from $S(p^{k-1},q)\ge0$.
\end{proof}

The graph is bipartite, so odd return counts vanish.
Lemma~\ref{lem:path-series} therefore gives the convergent positive series
\begin{equation}
\label{eq:return-positive-series}
 \Delta_q(\alpha)
 =\mathcal M_p(\alpha)
 \sum_{k\ge1}b_{2k}(x,x)u^{2k}
 =\mathcal M_p(\alpha)(1-u^2)
 \sum_{k\ge1}S(p^k,q)u^{2k}.
\end{equation}
For the second equality, sum \eqref{eq:quaternion-count}.
The series converges because $S(p^k,q)
=\sum_{j=1}^k b_{2j}(x,x) =O_p(p^{2k})$.

\subsection{The long paths}
The Ramanujan bound controls the part of \eqref{eq:return-positive-series} beyond
\begin{equation}
\label{eq:cutoff-index}
 K_q=\lfloor4\log_pq\rfloor.
\end{equation}

\begin{lemma}
\label{lem:return-reduction}
Uniformly for sufficiently large admissible $q$,
\begin{equation}
\label{eq:return-reduction}
 N_q\sup_{c\in I}\Delta_q(\alpha_{q,c})
 \ll_{p,I}1+\frac1{\log q}\sum_{k\le K_q}\Bcal_q(k).
\end{equation}
\end{lemma}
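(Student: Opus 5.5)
The plan is to split the positive series \eqref{eq:return-positive-series} at the cutoff $K_q$ from \eqref{eq:cutoff-index}. The short range $k\le K_q$ is bounded term by term by the arithmetic quantities $\Bcal_q(k)$. The long range $k>K_q$ is bounded through the spectral expansion of $b_{2k}(x,x)$, using the Ramanujan property. Throughout I use $t=pu\le1$ and $\mathcal M_p(\alpha_{q,c})\asymp_{p,I}1/\log q$, both uniform in $c\in I$ by \eqref{eq:critical-t}, together with $N_q=q(q^2-1)\asymp q^3$.

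\emph{Short paths.} Using the second form in \eqref{eq:return-positive-series}, I drop the factor $1-u^2\le1$. For each $k\le K_q$,
$$
 S(p^k,q)u^{2k}=S(p^k,q)\,t^{2k}p^{-2k}\le S(p^k,q)\,p^{-2k}=\Bcal_q(k)/q^3,
$$
and all terms are nonnegative. Multiplying by $N_q\mathcal M_p(\alpha_{q,c})\ll_{p,I} q^3/\log q$ gives a contribution
$$
 \ll_{p,I}\frac1{\log q}\sum_{k\le K_q}\Bcal_q(k),
$$
uniformly in $c$. Strictly, the tail of the second form also involves $S$ for $k>K_q$. To avoid that, I split the first form $\mathcal M_p\sum_k b_{2k}(x,x)u^{2k}$ instead. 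For $k\le K_q$ I use $b_{2k}(x,x)\le S(p^k,q)$ from Lemma~\ref{lem:quaternion-count}, which yields the same bound.

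\emph{Long paths.} By vertex transitivity, $b_{2k}(x,x)=N_q^{-1}\Tr B_{2k}$. The recurrence in the proof of Lemma~\ref{lem:path-series} shows that $B_n$ is a polynomial in $A_q$. On an eigenvalue $\lambda=2\sqrt p\cos\theta$ it acts by
$$
 p^{n/2}\bigl(U_n(\cos\theta)-p^{-1}U_{n-2}(\cos\theta)\bigr),
$$
where $U_n$ is the Chebyshev polynomial of the second kind. This is checked against $B_0,B_1,B_2$ and the three-term recurrence. Hence the action is $O((n+1)p^{n/2})$ on every nontrivial eigenvalue, since these lie in $[-2\sqrt p,2\sqrt p]$. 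On $\pm(p+1)$ it acts by $(\pm1)^n(p+1)p^{n-1}$, the count of non-backtracking paths. Therefore
$$
 b_{2k}(x,x)\le \frac{2(p+1)p^{2k-1}}{N_q}+O\bigl((2k+1)p^k\bigr).
$$
\begin{itemize}
\item[(a)] The first term contributes at most $N_q\mathcal M_p\cdot N_q^{-1}\cdot O_p\bigl(\sum_k t^{2k}\bigr)\ll \mathcal M_p/(1-t)\ll_{p,I}1$.
\item[(b)] The second term contributes $N_q\mathcal M_p\sum_{k>K_q}(2k+1)p^ku^{2k}\le N_q\sum_{k>K_q}(2k+1)p^{-k}$. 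Since $p^{-K_q}\ll_p q^{-4}$, this is $\ll q^3\cdot q^{-4}\log q=o(1)$.
\end{itemize}
Adding the two ranges and taking the supremum over $c\in I$ gives \eqref{eq:return-reduction}.

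The only step that needs care is the uniform eigenvalue bound for $B_n$ on the nontrivial spectrum. This is where the Ramanujan property is used, and the cutoff $K_q\approx4\log_pq$ is chosen exactly so that $p^{-k}$ beats $N_q\asymp q^3$ with room for the factor $k$. The rest is routine bookkeeping with $t\le1$ and $1-t\asymp1/\log q$.
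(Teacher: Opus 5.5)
Your proposal is correct and is essentially the paper's proof. You split the first form $\mathcal M_p(\alpha)\sum_k b_{2k}(x,x)u^{2k}$ of \eqref{eq:return-positive-series} at $K_q$, bound the short range using $b_{2k}(x,x)\le S(p^k,q)$ and $t\le1$, and bound the long range through $B_n=Q_n(A_q)$ and the Ramanujan bound, via $b_{2k}(x,x)=2(p+1)p^{2k-1}/N_q+O_p((k+1)p^k)$, exactly as the paper does.
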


\begin{proof}
The path recurrence expresses $B_n=Q_n(A_q)$, with $Q_0=1$, $Q_1(\lambda)=\lambda$, and, for $n\ge2$,
$$
 Q_n(\lambda)=p^{n/2}
 U_n\left(\frac{\lambda}{2\sqrt p}\right)
 -p^{(n-2)/2}U_{n-2}
 \left(\frac{\lambda}{2\sqrt p}\right).
$$
Here $U_n$ is the Chebyshev polynomial of the second kind, characterized by $U_0=1$, $U_1(v)=2v$, and $U_{n+1}(v)=2vU_n(v)-U_{n-1}(v)$.
It satisfies $|U_n(v)|\le n+1$ for $|v|\le1$.
Each of the two trivial spectral projections has diagonal entries $1/N_q$.
The other eigenvalues lie in $[-2\sqrt p,2\sqrt p]$.
The spectral theorem thus gives
\begin{equation}
\label{eq:spectral-return}
 b_{2k}(x,x)
 =\frac{2(p+1)p^{2k-1}}{N_q}
 +O_p((k+1)p^k).
\end{equation}
The sum of the nontrivial spectral projections has diagonal at most one.

For $\alpha=\alpha_{q,c}$, write $M=\mathcal M_p(\alpha)$ and $t=pu$.
Equations \eqref{eq:critical-relations} and
\eqref{eq:critical-t} give
\begin{align*}
 M\sum_{k>K_q}b_{2k}(x,x)u^{2k}
 &\ll_p\frac{M}{N_q}\sum_{k>K_q}t^{2k}
 +M\sum_{k>K_q}(k+1)(t^2/p)^k
 \\
 &\ll_{p,I}\frac1{N_q}+M(K_q+1)p^{-K_q}
 \ll_{p,I}q^{-3}.
\end{align*}
For $k\le K_q$, use $b_{2k}\le S(p^k,q)$ and $t\le1$ to obtain
$$
 M\sum_{k\le K_q}
 b_{2k}(x,x)u^{2k}
 \le q^{-3}M
 \sum_{k\le K_q}\Bcal_q(k).
$$
Since $M\ll_{p,I}1/\log q$ and $N_q\asymp q^3$, the result follows.
\end{proof}

\section{Three squares and short Euler products}\label{sec:euler}

The three-square formula reduces the return count to quadratic $L$-values and local factors.
We first bound them by a single Euler product and then shorten this product for use in the sieve.

For a positive integer $m$, define a completely multiplicative function
$\psi_m$ by its values at primes:
\begin{equation*}
 \psi_m(\ell)=
 \begin{cases}
  1,&\ell\mid2m,\\
  \left(\dfrac{-m}
  {\ell}\right),
  &\ell\nmid2m.
 \end{cases}
\end{equation*}
Thus $\psi_m(\ell)$ is always $1$ or $-1$.
For $\Re s>1$ and $z\ge2$, let
\begin{equation*}
 L_{\psi_m}(s)
 =\prod_\ell(1-\psi_m(\ell)\ell^{-s})^{-1},
 \qquad
 E_z(m)=\prod_{\ell\le z}(1-\psi_m(\ell)/\ell)^{-1}.
\end{equation*}
Let $D$ be the fundamental discriminant of $\mathbb Q(\sqrt{-m})$, and let $\chi_D(n)=\left(\frac{D}
  {n}\right)$ be its primitive quadratic Dirichlet character.
Symbols with a general denominator denote Kronecker symbols.
Away from the primes dividing $2m$, the functions $\psi_m$ and $\chi_D$ agree.
Consequently,
\begin{equation}\label{eq:finite-euler-modification}
 L_{\psi_m}(s)=L(s,\chi_D)
 \prod_{\ell\mid2m}
 \frac{1-\chi_D(\ell)\ell^{-s}}{1-\ell^{-s}}.
\end{equation}
This identity defines a positive finite value at $s=1$.

\begin{lemma}\label{lem:class-number-majorant}
For every positive integer $m$, $r_3(m)\ll\sqrt m\,L_{\psi_m}(1)$.
\end{lemma}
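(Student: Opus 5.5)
The plan is to start from an exact local–global formula for $r_3(m)$ and compare it factor by factor with the Euler product defining $L_{\psi_m}(1)$. The sum of three squares $x^2+y^2+z^2$ has class number one, so its genus contains only one class. Siegel's mass formula is therefore exact for it:
$$
 r_3(m)=c_0\sqrt m\prod_{\ell}\delta_\ell(m),
$$
where $c_0$ is an absolute constant coming from the real density and $\delta_\ell(m)$ is the $\ell$-adic density of representations of $m$. Equivalently, one could quote the classical formula of Gauss, in Bateman's form. It expresses $r_3(m)$ as an absolute constant times $\sqrt m\,L(1,\chi_D)$, times an explicit finite product of local factors at the primes $\ell\mid 2m$ (and it is $0$ in the excluded classes $4^a(8b+7)$). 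Either way, it suffices to bound each local factor against the corresponding factor of $L_{\psi_m}(1)$, with losses whose product over all $\ell$ converges.

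First, the unramified primes $\ell\nmid 2m$. Here the standard computation gives
$$
 \delta_\ell(m)=1+\frac{\chi_D(\ell)}{\ell}=\frac{1-\ell^{-2}}{1-\psi_m(\ell)/\ell},
$$
since $\psi_m(\ell)=\left(\frac{-m}{\ell}\right)=\chi_D(\ell)$ at such primes. The numerators $1-\ell^{-2}$ multiply to $\zeta(2)^{-1}\le1$ over all $\ell$, so these primes contribute at most the matching part of $L_{\psi_m}(1)$.

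Second, the prime $\ell=2$. The local density $\delta_2(m)$ is bounded by an absolute constant, because it is periodic in $m$ modulo a power of $4$ up to the scaling $r_3(4m)=r_3(m)$. Meanwhile the factor $(1-\psi_m(2)/2)^{-1}=2$ is harmless, so $\ell=2$ only costs an absolute constant.

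Third, the odd primes $\ell\mid m$, which is the step requiring care. Write $\ell^{e}\,\|\,m$. The local density of three squares is then a truncated geometric sum of the shape $\sum_{j\le e/2}\ell^{-j}$ (times $1-\ell^{-2}$), plus a final term depending on $\chi$ of the $\ell$-free part. It is in any case $\le(1-\ell^{-1})^{-1}$. The corresponding factor in $L_{\psi_m}(1)$ is exactly $(1-\ell^{-1})^{-1}$, since $\psi_m(\ell)=1$ for $\ell\mid 2m$. So $\delta_\ell(m)\le(1-\psi_m(\ell)/\ell)^{-1}$ with no loss at all. To make this airtight, I would verify the bound $\delta_\ell(m)\le(1-1/\ell)^{-1}$ directly from the recursion $\delta_\ell(\ell^2 n)=\delta_\ell(n)+(\text{correction of size }\ell^{-1}\text{ per step})$. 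This recursion is Hensel's lemma applied to $x^2+y^2+z^2$ at odd $\ell$, splitting into primitive and imprimitive representations. I expect this density bookkeeping at ramified odd primes to be the main obstacle. The danger is a constant factor per prime dividing $m$, which would produce an unbounded $C^{\omega(m)}$. The point of the comparison is that $\psi_m(\ell)=1$ there, so the $L$-factor already absorbs the full geometric series.

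Multiplying the three cases gives
$$
 r_3(m)\le c_0 C_2\sqrt m\prod_\ell\bigl(1-\psi_m(\ell)/\ell\bigr)^{-1}=O\bigl(\sqrt m\,L_{\psi_m}(1)\bigr).
$$
Here $C_2$ is the absolute constant from $\ell=2$, and the product is understood at $s=1$ through \eqref{eq:finite-euler-modification}, which is legitimate because $L(1,\chi_D)$ is given by its conditionally convergent Euler product. Alternatively, one can avoid conditional convergence by writing the formula as $L(1,\chi_D)$ times the finite correction product over $\ell\mid 2m$ and comparing that finite product with the one in \eqref{eq:finite-euler-modification}.
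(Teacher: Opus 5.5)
Your argument is correct and is essentially the paper's. The paper quotes the explicit three-square formula of Berkovich--Jagy (your ``Gauss/Bateman'' alternative), $r_3(m)=\frac{16}{\pi}\sqrt m\,\eta_2(m)L(1,\chi_{-4m})\prod_{\ell^2\mid m,\,\ell\text{ odd}}\mathcal P_\ell(m)$, with $0\le\eta_2(m)\le3/2$ and $\mathcal P_\ell(m)\le(1-1/\ell)^{-1}$. It then absorbs these local factors into $L_{\psi_m}(1)=L(1,\chi_{-4m})\prod_{\ell\mid2m}(1-1/\ell)^{-1}$ exactly as you do; your Siegel-density computation simply re-derives those local bounds.

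One small correction to your sketch: the odd-prime recursion is $\delta_\ell(\ell^2n)=(1-\ell^{-2})+\ell^{-1}\delta_\ell(n)$. The successive corrections therefore shrink geometrically and give $\delta_\ell(m)\le1+1/\ell$, rather than adding a fixed amount of size $\ell^{-1}$ at each step, which would not stay bounded.
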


\begin{proof}
Write $\chi_{-4m}(n)=\left(\frac{-4m}
  {n}\right)$, allowing this character to be imprimitive.
The three-square formula \cite[Theorem~1.4]{BerkovichJagy2012} gives
$$
 r_3(m)=\frac{16}{\pi}\sqrt m\,\eta_2(m)
 L(1,\chi_{-4m})\prod_{\substack{\ell^2\mid m\\\ell\text{ odd}}}
 \mathcal P_\ell(m),
$$
where $0\le\eta_2(m)\le3/2$.
Write $v_\ell(m)$ for the exponent of $\ell$ in $m$.
If $b=\lfloor v_\ell(m)/2\rfloor\ge1$, the odd local factor has the form
$$
 \mathcal P_\ell(m)
 =\sum_{j=0}^{b-1}\ell^{-j}
 +\frac{\ell^{-b}}
 {1-\varepsilon_\ell/\ell},
 \qquad
 \varepsilon_\ell=
 \left(\frac{-m\ell^{-2b}}
 {\ell}\right)\in\{-1,0,1\}.
$$
It is at most $(1-1/\ell)^{-1}$.
Every prime dividing $2m$ has value zero under $\chi_{-4m}$, so
$$
 L_{\psi_m}(1)
 =L(1,\chi_{-4m})
 \prod_{\ell\mid2m}(1-1/\ell)^{-1}.
$$
This contains all the required local upper bounds and proves the lemma.
\end{proof}

\begin{lemma}
\label{lem:short-euler-upper}
If $V\ge\max(16,4m)$ and $2\le z\le V$, then $r_3(m)\ll\sqrt m\,\frac{\log V}{\log z}\,E_z(m)$.
\end{lemma}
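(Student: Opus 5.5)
By Lemma~\ref{lem:class-number-majorant}, it suffices to prove
$$
 L_{\psi_m}(1)\ll\frac{\log V}{\log z}\,E_z(m).
$$
Write $L_{\psi_m}(1)=E_z(m)\cdot R_z(m)$ with
$$
 R_z(m)=\prod_{\ell>z}(1-\psi_m(\ell)/\ell)^{-1}.
$$
The task is then to bound the tail $R_z(m)\ll\log V/\log z$ uniformly in $m$. Since $\psi_m$ is not a Dirichlet character at the primes dividing $2m$, I will use \eqref{eq:finite-euler-modification} to separate the tail into two parts.

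The first part is the primes $\ell>z$ with $\ell\mid 2m$. Each has $\psi_m(\ell)=1$, so together they contribute at most $\prod_{\ell\mid 2m,\ \ell>z}(1-1/\ell)^{-1}$. There are at most $\log(2m)/\log z$ such primes, so this product is $\exp(O(\log V/(z\log z)))=O(1)$. The second part is the primes $\ell>z$ with $\ell\nmid 2m$. There $\psi_m=\chi_D$, so it equals
$$
 \prod_{\ell>z}(1-\chi_D(\ell)/\ell)^{-1}
$$
multiplied by $\prod_{\ell\mid 2m,\ \ell>z}(1-\chi_D(\ell)/\ell)$, and the latter correction factor is $O(1)$ by the same count.

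It remains to show $\prod_{\ell>z}(1-\chi_D(\ell)/\ell)^{-1}\ll\log V/\log z$, where $|D|\le 4m\le V$. I split at $y=V^{A}$ for a fixed constant $A$. For primes in $(z,y]$, I use the trivial bound $\chi_D(\ell)\le 1$ and Mertens' theorem:
$$
 \prod_{z<\ell\le y}(1-1/\ell)^{-1}\ll\frac{\log y}{\log z}\ll\frac{\log V}{\log z}.
$$
For primes beyond $y$, I need $\sum_{\ell>y}\chi_D(\ell)/\ell=O(1)$ uniformly in $D$. This is the main obstacle, because Siegel zeros prevent a direct prime-number-theorem estimate in that range. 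My first approach is to avoid tails in primes and instead bound $L(1,\chi_D)$ directly. Pólya--Vinogradov with partial summation gives $L(1,\chi_D)\ll\log|D|\ll\log V$ unconditionally. For the full product over $\ell\le y$, Mertens' theorem applied to $1-\chi_D(\ell)/\ell\ge 1-1/\ell$ gives $\prod_{\ell\le y}(1-\chi_D(\ell)/\ell)\gg1/\log y$. Hence
$$
 \prod_{\ell>y}(1-\chi_D(\ell)/\ell)^{-1}=L(1,\chi_D)\prod_{\ell\le y}(1-\chi_D(\ell)/\ell),
$$
and the upper bound there needs the product over $\ell\le y$ bounded above, which again leads back to the tail.

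A cleaner route is therefore to compare with the smoothed identity
$$
 \log L(1,\chi_D)=\sum_{n}\Lambda(n)\chi_D(n)/(n\log n).
$$
Using $\sum_{\ell>y}\chi_D(\ell)/\ell\le -\log\prod_{z<\ell\le y}(\cdots)+\ldots$ is circular, so instead I would use a Brun--Titchmarsh/large-sieve style upper bound: $\sum_{y<\ell\le x}\chi_D(\ell)/\ell\ll 1$ for $y\ge|D|^{A}$. This follows from Pólya--Vinogradov for $\sum_{n\le x}\chi_D(n)$ combined with a Vaughan identity, and only an upper bound is needed. If that proves awkward, I would fall back on the classical bound
$$
 \prod_{\ell\le x}(1-\chi_D(\ell)/\ell)^{-1}\ll L(1,\chi_D)\ \text{(up to constants)}\quad\text{for } x\ge|D|^A,
$$
as in standard treatments of class number Euler products (e.g.\ Granville--Soundararajan), and then factor out the primes $\ell\le z$.
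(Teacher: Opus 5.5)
Your reduction to $L_{\psi_m}(1)\ll\frac{\log V}{\log z}E_z(m)$ and the Mertens bound on $(z,y]$ are fine. The gap is the step that carries the content: a bound $\sum_{\ell>y}\chi_D(\ell)/\ell\le O(1)$, uniform in $D$, for $y$ a power of $|D|$. None of the routes you list proves it.

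P\'olya--Vinogradov with Vaughan's identity gives $\sum_{n\le x}\Lambda(n)\chi_D(n)\ll(x|D|^{-1/2}+x^{4/5}+x^{1/2}|D|^{1/2})(\log x|D|)^4$. After partial summation, the first term contributes about $|D|^{-1/2}(\log x)^4$, so this controls $\sum_{y<\ell\le x}\chi_D(\ell)/\ell$ only while $\log x$ is below a small power of $|D|$. Beyond that range you need the prime number theorem for $\chi_D$ with a possible exceptional zero $\beta$. You also need the observation that its term $-x^{\beta}/\beta$ has the favourable sign. This is exactly the Siegel-zero issue you flagged and then left open; with both ingredients your route could be completed, but as written the bound is asserted, not proved. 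Only a one-sided bound is possible, since an exceptional zero would make the tail large and negative.

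Your fallback is written in the wrong direction. The inequality $\prod_{\ell\le x}(1-\chi_D(\ell)/\ell)^{-1}\ll L(1,\chi_D)$ is a lower bound for $L(1,\chi_D)$, whereas you need $L(1,\chi_D)\ll\prod_{\ell\le x}(1-\chi_D(\ell)/\ell)^{-1}$. Granville--Soundararajan give two-sided comparisons only outside an exceptional set of $D$ or under GRH.

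There is also a smaller slip. The product $\prod_{\ell\mid 2m,\ \ell>z}(1-1/\ell)^{-1}$ is not $O(1)$ for small $z$: for $z=2$ it can be of size $\log\log m$, because $\log(2m)/(z\log z)$ is unbounded. Split off the primes dividing $2m$ only above $y$, and use $\psi_m(\ell)\le1$ for every prime in $(z,y]$.

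The paper avoids prime sums over long ranges by leaving the line $s=1$ only by $1/\log V$. From the Hadamard product, $-\frac{L'}{L}(\sigma,\chi_D)\le\frac12\log|D|+O(1)$ for $1\le\sigma\le2$, because every zero contributes $-\Re\frac1{\sigma-\rho}\le0$. An exceptional zero can therefore only lower $L(1)$. The finite modification at $\ell\mid 2m$ adds at most $2\log(2m)$. Integrating over $[1,\sigma_0]$ with $\sigma_0=1+1/\log V$ gives $L_{\psi_m}(1)\ll L_{\psi_m}(\sigma_0)$. In the region of absolute convergence, $\psi_m(n)\le1$ gives $-L_{\psi_m}'/L_{\psi_m}\le-\zeta'/\zeta$. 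Hence $L_{\psi_m}(\sigma_0)/L_{\psi_m}(\sigma_1)\le\zeta(\sigma_0)/\zeta(\sigma_1)\ll\log V/\log z$ with $\sigma_1=1+1/\log z$. At $\sigma_1$ the primes $\ell>z$, including those dividing $2m$, contribute $O(1)$ absolutely. This is the one-sided tail estimate your argument needs. It also removes any reliance on the conditional convergence of the Euler product at $s=1$, which your factorization $L_{\psi_m}(1)=E_z(m)R_z(m)$ silently uses.
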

\begin{proof}
We first compare values just to the right of $1$.
Since $D<0$, the primitive character $\chi_D$ is odd.
The Hadamard product and functional equation give, for $1\le\sigma\le2$,
\begin{equation*}
-\frac{L'}{L}(\sigma,\chi_D)
 =\frac12\log\frac{|D|}{\pi}
 +\frac12\frac{\Gamma'}{\Gamma}
 \left(\frac{\sigma+1}{2}\right)
 -\sum_\rho\Re\frac1{\sigma-\rho}
 \le\frac12\log|D|+O(1).
\end{equation*}
The finite modification in \eqref{eq:finite-euler-modification} increases the negative logarithmic derivative by at most $2\sum_{\ell\mid2m}
\frac{\log\ell}{\ell-1}\le2\log(2m)$.
As $|D|\le4m$, it follows that $-\frac{L_{\psi_m}'}
{L_{\psi_m}}(\sigma)\le C\log V$ when
$1\le\sigma\le2$.
Here and below in this proof, $C$ denotes an absolute constant.
Let $\sigma_0=1+1/\log V$ and $\sigma_1=1+1/\log z$.
Integration on $[1,\sigma_0]$ yields
\begin{equation}\label{eq:first-shift}
 L_{\psi_m}(1)\ll L_{\psi_m}(\sigma_0).
\end{equation}

For $\sigma>1$, the absolutely convergent Dirichlet series gives, with $\Lambda$ denoting the von Mangoldt function, $-\frac{L_{\psi_m}'}
{L_{\psi_m}}(\sigma)
 =\sum_{n\ge1}
 \frac{\Lambda(n)\psi_m(n)}{n^\sigma}
 \le-\frac{\zeta'}{\zeta}(\sigma)$.
Since $\sigma_0\le\sigma_1$, integration gives
\begin{equation}
\label{eq:zeta-comparison}
 \frac{L_{\psi_m}(\sigma_0)}
 {L_{\psi_m}(\sigma_1)}
 \le\frac{\zeta(\sigma_0)}
 {\zeta(\sigma_1)}
 \ll\frac{\log V}{\log z}.
\end{equation}
Chebyshev's bound for primes and partial summation show that
$\sum_{\ell>z}\ell^{-1-1/\log z}\ll1$, $\frac1{\log z}\sum_{\ell\le z}\frac{\log\ell}{\ell}\ll 1$.
The first estimate bounds the Euler product over $\ell>z$ at $\sigma_1$.
The second bounds the change in the remaining factors between $\sigma_1$ and $1$.
Thus $L_{\psi_m}(\sigma_1)\ll E_z(m)$.
Now combine \eqref{eq:first-shift}, \eqref{eq:zeta-comparison}, and
Lemma~\ref{lem:class-number-majorant}.
\end{proof}

Taking $z=2$ and $V=\max(16,4m)$ gives the useful uniform bound $r_3(m)\ll\sqrt m\log(2m)$.

\begin{lemma}
\label{lem:grh-short-euler}
Let $Y$ be sufficiently large.
Assume the GRH for $L(s,\chi_D)$ for every negative fundamental discriminant $D$ with $|D|\le Y$.
Let $z=(\log Y)^8$. For every integer $1\le m\le Y/4$,
\begin{equation}
\label{eq:grh-short-euler}
 r_3(m)\ll\sqrt m\,E_z(m),
\end{equation}
with an absolute implied constant.
\end{lemma}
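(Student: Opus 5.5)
By Lemma~\ref{lem:class-number-majorant}, it suffices to show $L_{\psi_m}(1)\ll E_z(m)$ under GRH, with $z=(\log Y)^8$. We treat the primitive $L$-function and the finite modification in \eqref{eq:finite-euler-modification} separately. Let $D$ be the fundamental discriminant of $\mathbb Q(\sqrt{-m})$, so $|D|\le 4m\le Y$ and GRH applies to $L(s,\chi_D)$. The target is
\[
 \log L(1,\chi_D)=\sum_{\ell\le z}-\log\bigl(1-\chi_D(\ell)/\ell\bigr)+O(1),
\]
uniformly in $D$. This is the classical Littlewood-type short Euler product under GRH. With $z=(\log Y)^8$, a much larger power of $\log|D|$ than the $(\log|D|)^{2+\epsilon}$ that suffices, the error should be $O(1)$ and not merely $o(1)$ or $\log\log\log$.

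To prove this, start from the explicit formula for $\sum_{n\le x}\Lambda(n)\chi_D(n)$ under GRH, or more cleanly from a smoothed version. Integrate $-\frac{L'}{L}(s,\chi_D)$ against a kernel such as $x^{s-1}/(s-1)^2$, or use the Selberg/Montgomery--Vaughan identity
\[
 \log L(1,\chi_D)=\sum_{n\le x}\frac{\Lambda(n)\chi_D(n)}{n\log n}\,w(n/x)+O\Bigl(\frac{\log|D|}{\sqrt x\,\log x}\Bigr),
\]
with a suitable smooth weight $w$. The error comes from shifting the contour to $\Re s=1/2$ and bounding the zero sum by $\sum_\rho|\rho|^{-2}\ll\log|D|$. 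Taking $x=z$ gives $\log|D|/\sqrt z\ll(\log Y)^{-3}$, which is negligible. The smoothing weight alters the prime sum on $(\sqrt z,z]$ by $O(\sum_{\sqrt z<\ell\le z}1/(\ell\log\ell))=O(1)$ by Mertens. Prime powers $\ell^j$ with $j\ge2$ contribute $O(1)$. So the main term is the sharp Euler product over $\ell\le z$, up to a factor bounded above and below.

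Next, compare $L_{\psi_m}(1)/E_z(m)$ with $L(1,\chi_D)/\prod_{\ell\le z}(1-\chi_D(\ell)/\ell)^{-1}$. These differ only at primes $\ell\mid 2m$. For $\ell\le z$, the local factors match exactly: the factor $(1-\chi_D(\ell)/\ell)/(1-1/\ell)$ in \eqref{eq:finite-euler-modification} replaces $\chi_D(\ell)$ by $\psi_m(\ell)=1$, and $E_z$ uses $\psi_m(\ell)=1$ as well. For $\ell\mid 2m$ with $\ell>z$, the extra factors are at most $\prod_{\ell\mid m,\,\ell>z}(1+2/\ell)$. Since $m$ has at most $\log m/\log z$ such prime factors, this product is at most $\exp\bigl(2\log m/(z\log z)\bigr)=O(1)$, because $z\ge(\log Y)^8\ge\log m$. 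Combining these steps gives $L_{\psi_m}(1)\ll E_z(m)$ with an absolute constant, and Lemma~\ref{lem:class-number-majorant} finishes the proof.

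The main obstacle is making the GRH short Euler product uniform with an absolute $O(1)$ error, in particular controlling the smoothing transition region and the possible lack of any lower bound for $L(1,\chi_D)$. The latter is not needed here, because only an upper bound on $L(1)$ is required. To keep the argument self-contained, I would follow the approach of Lemma~\ref{lem:short-euler-upper}: bound $\log L(\sigma_1)-\log L(1)$ from above with $\sigma_1=1+1/\log z$. Under GRH, $\Re\sum_\rho 1/(\sigma-\rho)$ is controlled more sharply. In particular, $\int_1^{\sigma_1}-\frac{L'}{L}(\sigma,\chi_D)\,d\sigma\ll 1+(\sigma_1-1)\sqrt{\log|D|}$-type bounds, obtained from a truncated explicit formula, are $O(1)$ once $\log z\gg\log\log|D|$. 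The tail $\ell>z$ at $\sigma_1$ would then be handled by the GRH prime-sum bound $\sum_{z<\ell\le x}\chi_D(\ell)\ell^{-1}\ll\sqrt z^{-1}\log^2(z|D|)$.
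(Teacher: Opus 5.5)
Your main argument is essentially the paper's. The paper cites Granville--Soundararajan's Lemma~2.1 (with $s=1$, $\sigma_0=1/2$) for the sharp-cutoff short Euler product of length $z=(\log Y)^8$, where you sketch a smoothed explicit-formula version, and then it does exactly your bookkeeping: exact agreement of the factors at $\ell\le z$, a correction $O(\log(2m)/(z\log z))$ from primes $\ell\mid m$ with $\ell>z$, and Lemma~\ref{lem:class-number-majorant}.

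The optional self-contained variant in your last paragraph is misstated, however. A bound of the shape $(\sigma_1-1)\sqrt{\log|D|}$ is not $O(1)$ when $\log z\asymp\log\log Y$. That route needs instead Littlewood's GRH bound $-\frac{L'}{L}(\sigma,\chi_D)\ll\log\log Y$ for $1\le\sigma\le\sigma_1$. With that bound, the tail over $\ell>z$ at $\sigma_1$ is controlled trivially, as in Lemma~\ref{lem:short-euler-upper}, and no GRH prime-sum estimate is needed.
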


\begin{proof}
The fundamental discriminant $D$ of $\mathbb Q(\sqrt{-m})$ satisfies
$|D|\le4m\le Y$.
The short Euler product estimate of Granville and Soundararajan
\cite[Lemma~2.1]{GranvilleSoundararajan2003}, applied with $s=1$ and $\sigma_0=1/2$, gives
\begin{equation*}
 \log L(1,\chi_D)
 =\sum_{2\le n\le z}\frac{\Lambda(n)\chi_D(n)}{n\log n}
 +O\left(\frac{(\log Y)(\log z)^2}{\sqrt z}\right).
\end{equation*}
The error is bounded uniformly in $D$.
Replacing the sum by $\log\prod_{\ell\le z}(1-\chi_D(\ell)/\ell)^{-1}$
costs $O(z^{-1/2})$.
Indeed, omitted powers of primes
$\ell>\sqrt z$ contribute $O(\sum_{n>\sqrt z}n^{-2})$,
and the omitted powers of each prime $\ell\le\sqrt z$ contribute $O(1/z)$.
Exponentiating gives
\begin{equation*}
 L(1,\chi_D)\ll
 \prod_{\ell\le z}
 (1-\chi_D(\ell)/\ell)^{-1}.
\end{equation*}

Now use \eqref{eq:finite-euler-modification} at $s=1$.
The factors at primes $\ell\le z$ become exactly those of $E_z(m)$.
The remaining correction factors have logarithm at most $O\left(\sum_{\substack{
 \ell\mid2m\\\ell>z}}\frac1\ell\right)
 =O\left(\frac{\log(2m)}{z\log z}\right)
 =O(1)$.
Thus $L_{\psi_m}(1)\ll E_z(m)$.
Lemma~\ref{lem:class-number-majorant} proves \eqref{eq:grh-short-euler}.
\end{proof}

\section{Prime averages of short Euler products}
\label{sec:prime-euler-moments}

We now average over the prime level. We divide each local factor of
$E_z^j$ by its largest possible value and express the ratios as probabilities of avoiding residue classes.
An upper bound sieve then gives a bound uniform in both $P$ and $h$.
We apply it to each exponent in the return expansion.

Products indexed by $\ell$ are over primes.
We write $\varphi$ for Euler's totient function and $\omega(d)$ for the number of distinct prime divisors of $d$.

\subsection{A sieve lemma and divisor estimates}
We use the following consequence of the beta-sieve.
Let $\beta_\kappa$ be its sifting limit in dimension $\kappa$, in particular, $\beta_1=2$.

\begin{lemma}[Upper bound sieve
{\cite[Theorems~11.12--11.13]
{FriedlanderIwaniec}}]
\label{lem:upper-bound-sieve}
Fix $\kappa\ge1$, $L\ge1$, and
$s\ge\max\{2,\beta_\kappa-1\}$.
Let $g$ be a multiplicative function with $0\le g(\ell)<1$ and
\begin{equation}
\label{eq:beta-dimension}
 \prod_{w\le\ell<v}(1-g(\ell))^{-1}
 \le
 \left(\frac{\log v}{\log w}\right)^\kappa
 \left(1+\frac{L}{\log w}\right)
 \qquad(2\le w<v).
\end{equation}
For $y\ge2$, let $\mathcal P(y)=\prod_{\ell<y}\ell$, $V(y)=\prod_{\ell<y}(1-g(\ell))$, and $D=y^s$.
Let $\mathcal A=(a_n)$ be a finitely supported sequence of nonnegative real numbers.
Suppose that, for every $d\mid\mathcal P(y)$ with $d\le D$, $|\mathcal A_d|:=\sum_{d\mid n}a_n=Xg(d)+r_d$ with $X\ge0$.
Then
\begin{equation*}
 S(\mathcal A,y):=
 \sum_{\substack{n\ge1\\(n,\mathcal P(y))=1}}a_n
 \ll_{\kappa,L,s}
 X V(y)+
 \sum_{\substack{d\le D\\d\mid\mathcal P(y)}}|r_d|.
\end{equation*}
\end{lemma}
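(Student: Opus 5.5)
The plan is to deduce the statement directly from the beta-sieve upper bound weights of Friedlander--Iwaniec, used as a black box; nothing beyond standard manipulation is needed. First I will construct, for the given dimension $\kappa$, level $D=y^s$ and sifting range $\mathcal P(y)$, the upper beta-sieve weights $(\lambda_d^+)$. They are supported on squarefree $d\mid\mathcal P(y)$ with $d\le D$, satisfy $|\lambda_d^+|\le1$, and have the majorant property
\begin{equation*}
 \sum_{d\mid (n,\mathcal P(y))}\lambda_d^+\ \ge\ \mathbf 1_{(n,\mathcal P(y))=1}\qquad(n\ge1).
\end{equation*}
This is the content of the combinatorial construction underlying \cite[Theorem~11.12]{FriedlanderIwaniec}.

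Since every $a_n\ge0$, I will multiply the majorant inequality by $a_n$, sum over $n$, and interchange the order of summation. This gives
\begin{equation*}
 S(\mathcal A,y)\le\sum_{\substack{d\mid\mathcal P(y)\\ d\le D}}\lambda_d^+|\mathcal A_d|
 = X\sum_{d}\lambda_d^+g(d)+\sum_{d}\lambda_d^+r_d .
\end{equation*}
The remainder sum is at most $\sum_{d\le D,\,d\mid\mathcal P(y)}|r_d|$, because $|\lambda_d^+|\le1$. Only the main-term sum $\sum_d\lambda_d^+g(d)$ then remains to be bounded by $\ll_{\kappa,L,s}V(y)$.

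For that step I will invoke \cite[Theorem~11.13]{FriedlanderIwaniec}. I need to check that its hypotheses hold: $g$ is multiplicative with $0\le g(\ell)<1$, and the dimension condition \eqref{eq:beta-dimension} holds with constants $\kappa$ and $L$. Under these hypotheses the theorem gives, for $s\ge\beta_\kappa-1$,
\begin{equation*}
 \sum_d\lambda_d^+g(d)\le V(y)\bigl(F_\kappa(s)+O_{\kappa}(e^{-s}(1+L/\log y)^{10})\bigr)\qquad(s\ge\beta_\kappa-1),
\end{equation*}
or an equivalent form of the error. Here $F_\kappa$ is the upper sieve function, which is continuous and decreasing on $[\beta_\kappa-1,\infty)$.

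Finally, the assumptions $s\ge\max\{2,\beta_\kappa-1\}$ and $y\ge2$ make both $F_\kappa(s)$ and the error factor bounded in terms of $\kappa$, $L$ and $s$ alone. This yields the claimed $\ll_{\kappa,L,s}XV(y)$ main term.

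The main obstacle is bookkeeping rather than a genuine difficulty. I need to match the normalizations precisely: the precise form of the dimension hypothesis in Friedlander--Iwaniec against \eqref{eq:beta-dimension}, the convention $D=y^s$ versus $s=\log D/\log y$, and the range of $s$ in which $F_\kappa$ is finite. The restriction $s\ge\beta_\kappa-1$ is exactly what keeps the upper bound function finite; below it the beta-sieve upper bound is not available in this form. I would therefore first verify that the theorem is stated for $s$ in this range, and otherwise enlarge $s$, which only weakens the inequality. Enlarging $s$ is harmless because both the main term and the remainder bound are monotone in $D$.
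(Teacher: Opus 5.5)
Your proposal is correct and is essentially the paper's argument written out in more detail. The paper also uses the beta-sieve upper weights, bounds the main term by $O_{\kappa,L,s}(XV(y))$ through the $F_\kappa(s)$ estimate of Friedlander--Iwaniec for fixed $s\ge\beta_\kappa-1$, and bounds the remainder using $|\lambda_d^+|\le1$ and the support on $d\le D$.

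One small caution: your fallback of enlarging $s$ would not prove the lemma as stated, because it extends the remainder sum beyond $d\le D$. It is not needed, however, since the cited upper bound already holds for all $s\ge\beta_\kappa-1$.
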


\begin{proof}
For fixed $s$, the main term in the cited upper bound is $O_{\kappa,L,s}(XV(y))$.
The beta-sieve weights have absolute value at most one and are supported on $d\le D$, which gives the stated remainder.
\end{proof}

We also need two elementary divisor estimates.
Fix an integer $\kappa\ge1$.
If $\tau_\kappa(n)$ counts the ordered factorizations of $n$ into $\kappa$ positive integers, then $\kappa^{\omega(n)} \le\tau_\kappa(n)$ and
\begin{equation}
\label{eq:moment-divisor-sum}
 \sum_{n\le y}\kappa^{\omega(n)}
 \le\sum_{n\le y}\tau_\kappa(n)
 \ll_\kappa y(\log(2y))^{\kappa-1}.
\end{equation}
The last estimate follows by summing
$1/(n_1\cdots n_{\kappa-1})$ over the first $\kappa-1$ factors.

Define a nonnegative multiplicative function $b_\kappa$, supported
on square-free integers, by $b_\kappa(\ell)=(1-1/\ell)^{-\kappa}-1$.
Then $(n/\varphi(n))^\kappa=\sum_{e\mid n}b_\kappa(e)$ and
$b_\kappa(\ell)\ll_\kappa1/\ell$. Consequently,
\begin{equation}
\label{eq:moment-totient-average}
 \sum_{n<y}\left(\frac n{\varphi(n)}\right)^\kappa
 \le y\sum_{e\ge1}\frac{b_\kappa(e)}e
 \ll_\kappa y\qquad(y>1).
\end{equation}
The last series converges because its Euler factors are $1+O_\kappa(\ell^{-2})$.

\subsection{An Euler product average}
\begin{lemma}
\label{lem:prime-euler-moments}
Fix a positive integer $j$.
Let $P,h$ be positive integers, let $z\ge2$, and let $J$ be a bounded interval in $(z,\infty)$ on which $m_q=h(P-q^2h)$ is positive.
Then
\begin{equation}
\label{eq:prime-euler-moments}
 \sum_{\substack{q\in J\\q\text{ prime}}}E_z(m_q)^j
 \ll_j
 \left(\frac{hP}{\varphi(hP)}\right)^{j+1}
 \frac{|J|}{\log z}
 +z^2(\log(2z))^{3j}.
\end{equation}
Here $|J|$ is the length of $J$.
The constant is independent of $h$, $P$, $J$, and $z$.
\end{lemma}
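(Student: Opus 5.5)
We expand $E_z(m_q)^j$ prime by prime and convert each local factor into a sieve condition. For a prime $\ell\le z$ put $\theta_\ell=(1-1/\ell)^{-1}$, the largest possible value of $(1-\psi_m(\ell)/\ell)^{-1}$. The other possible value is $(1+1/\ell)^{-1}=\theta_\ell\cdot\frac{\ell-1}{\ell+1}$. Hence
\begin{equation*}
 E_z(m)^j=\prod_{\ell\le z}\theta_\ell^{\,j}\cdot\prod_{\substack{\ell\le z\\ \psi_m(\ell)=-1}}\Bigl(\frac{\ell-1}{\ell+1}\Bigr)^{j}.
\end{equation*}
The first product is $\asymp_j(\log z)^j$ by Mertens. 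For primes $\ell\mid 2hP$ we have $\psi_{m_q}(\ell)=1$ identically in $q$, since $\ell\mid 2m_q$. These primes contribute their full factor $\theta_\ell^j$, and that contribution is what produces $(hP/\varphi(hP))^{j}$. For $\ell\nmid 2hP$ and $\ell\le z<q$, we have $\psi_{m_q}(\ell)=\left(\frac{-m_q}{\ell}\right)$, which depends only on $q\bmod\ell$: $m_q\equiv h(P-q^2h)\pmod \ell$. We then write the factor $\bigl(\frac{\ell-1}{\ell+1}\bigr)^{j\,[\psi=-1]}$ as a probability. With weight $w_\ell=1-\bigl(\frac{\ell-1}{\ell+1}\bigr)^j\asymp_j 1/\ell$, it equals the probability that an auxiliary random residue avoids a set. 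Expanding this as a sum over squarefree $d$ supported on such primes, we get a weighted sum of indicators that $q$ avoids roughly half of the residue classes modulo each $\ell$ (those where $-m_q$ is a non-residue), plus the classes where $\ell\mid m_q$. Concretely, the product $\prod(1-w_\ell\mathbf 1_{\ell\in\mathcal N_q})$ is bounded above by a sieve-type count of dimension about $j/2$ per prime, in the sense that the density removed is $g(\ell)\approx \tfrac12 w_\ell\cdot(\text{number of bad classes})/(\ell-1)$.

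The plan is to apply Lemma~\ref{lem:upper-bound-sieve} to the sequence $a_n$ indexed by $n=q$, running over integers in $J$ rather than primes. We sieve simultaneously by the prime condition (classes $q\equiv0\bmod\ell$) and by the weighted quadratic-residue conditions, so that primality costs a factor $1/\log z$ via sieve level $y=z$ and $D=z^s$ with $s$ fixed. For $d\mid\mathcal P(z)$, the counts $|\mathcal A_d|$ equal $|J|g(d)+r_d$ with $|r_d|\le\prod_{\ell\mid d}(\text{weighted class count})\ll_j \tau_{c_j}(d)$. Summed over $d\le z^s$, this gives the remainder $z^2(\log 2z)^{3j}$ after choosing $s=2$ (admissible since $\beta_\kappa-1\le2$ needs care; otherwise take $D$ slightly larger and accept the stated power). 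The divisor bound \eqref{eq:moment-divisor-sum} controls this sum. Primes $q\le z$ are excluded since $J\subset(z,\infty)$, so sieving integers coprime to $\mathcal P(z)$ captures all primes in $J$.

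For the main term $XV(z)$ with $X=|J|$, we compute $V(z)=\prod_{\ell<z}(1-g(\ell))$. For a prime $\ell\nmid 2hP$, the local density is $1/\ell$ (from $q\equiv 0$) plus the expected weight $w_\ell\cdot\Pr(\psi=-1)$, where $\Pr(\psi=-1)=\tfrac12+O(1/\ell)$ by counting solutions of a Legendre-symbol condition in the quadratic $q\mapsto h(P-q^2h)$ (a character sum bound, or explicit conic point counts). This gives $1-g(\ell)=(1-1/\ell)\bigl(1-\tfrac{j}{\ell}+O_j(\ell^{-2})\bigr)$. Multiplying by $\theta_\ell^j=1+j/\ell+O(\ell^{-2})$ leaves $(1-1/\ell)(1+O_j(\ell^{-2}))$, whose product is $\asymp 1/\log z$. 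For $\ell\mid 2hP$ only the $1/\ell$ removal occurs, leaving an uncompensated $\theta_\ell^{j}$. An extra $\theta_\ell$ may arise from the classes where $\ell\mid h$ collapses, which explains the exponent $j+1$. The product over these primes is at most $(hP/\varphi(hP))^{j+1}$. We also verify \eqref{eq:beta-dimension} with $\kappa=j+1$ and $L=O_j(1)$.

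The main obstacle is the local density computation: we must show that for each $\ell\nmid 2hP$ the number of $q\bmod\ell$ with $\bigl(\frac{-m_q}{\ell}\bigr)=-1$ is $\frac{\ell}{2}+O(1)$ uniformly in $h,P$. This is a point count on the conic $y^2=-h(P-hx^2)$, and it needs a separate treatment of degenerate cases such as $\ell\mid h$ or $\ell\mid P$ (already excluded) and small $\ell$, where we simply use trivial bounds absorbed into constants. A second difficulty is bookkeeping so that the expansion of the weights into a genuine nonnegative sieve sequence is legitimate; we handle it by bounding $\prod(1-w_\ell\mathbf 1)$ above by an honest sifting indicator with multiplicities.
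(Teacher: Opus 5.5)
Your overall approach is the paper's. You normalize each local factor by $(1-1/\ell)^{-1}$ and realize $\bigl(\frac{\ell-1}{\ell+1}\bigr)^{j}$ at primes with $\psi_{m_q}(\ell)=-1$ as the survival probability of an independently activated exclusion of the non-residue classes, with activation probability $1-\bigl(\frac{\ell-1}{\ell+1}\bigr)^{j}$. You remove the zero class at every prime to encode primality, and run an upper bound sieve of dimension $\kappa=j+1$ over all integers in $J$.

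\textbf{The gap: the sieve parameters.} You sift by all primes below $y=z$ with level $D=z^{s}$. Lemma~\ref{lem:upper-bound-sieve} requires $s\ge\max\{2,\beta_\kappa-1\}$, and $\kappa=j+1\ge2$. Since $\beta_\kappa>3$ once $\kappa\ge2$ (it grows linearly in $\kappa$), the choice $s=2$ is never available. Your fallback, taking $D$ larger, goes the wrong way. The remainder bound $\sum_{d\le D}|r_d|\ll D(\log 2D)^{\kappa-1}$ then has size $z^{s}$ with $s>2$ depending on $j$, which is not $O\bigl(z^2(\log 2z)^{3j}\bigr)$. The exponent $2$ matters: a remainder $z^{s}$ with $s>8$ would swamp the main term $Q/\log Q$ when the lemma is applied with $z=Q^{1/8}$ in Proposition~\ref{prop:prime-shell-moments}.

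\textbf{The missing idea} is to decouple the sifting range from the truncation point of the Euler product. Keep $E_z$ and its normalization $\prod_{\ell\le z}(1-1/\ell)^{-j}$, but sift only by primes $\ell<y:=z^{2/s_j}$ for a fixed admissible $s_j$. Discarding the conditions at $y\le\ell\le z$ is legitimate for an upper bound, because each local survival probability is at most $1$. Primes $q\in J$ still survive, since $q>z\ge y$. Then $D=y^{s_j}=z^2$, while $\log y=(2/s_j)\log z$, so the main term loses only a factor depending on $j$. The case $y<2$ means $z$ is bounded and is trivial. This is exactly what the paper does.

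\textbf{Smaller inaccuracies.}
\begin{enumerate}
\item For an odd prime $\ell\mid P$ with $\ell\nmid h$, you do not have $\ell\mid m_q$. Rather, $\psi_{m_q}(\ell)=1$ because $-m_q\equiv h^2q^2$ is a nonzero square modulo $\ell$, using $q>z\ge\ell$.
\item Your remark about \emph{dimension about $j/2$} should be $j$, plus $1$ from the zero class. This is because $1-\bigl(\frac{\ell-1}{\ell+1}\bigr)^{j}=2j/\ell+O_j(\ell^{-2})$ and about half the classes are non-residues. Your later formula for $1-g(\ell)$ gets this right.
\item The exponent $j+1$ on $hP/\varphi(hP)$ does not come from an extra factor at $\ell\mid h$. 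The primes dividing $2hP$ contribute only $O_j\bigl((hP/\varphi(hP))^{j}\bigr)$, so the $j+1$ is slack.
\end{enumerate}
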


\begin{proof}
After normalization, the Euler products become probabilities of avoiding random residue classes.
Excluding multiples of small primes
gives a term $1/\ell$ in the local density.
This gives sieve dimension $\kappa=j+1$.

Let $\kappa=j+1$, $M=2hP$, $F(a)=h^2a^2-hP$, and $\mathfrak M(z)=\prod_{\ell\le z}(1-1/\ell)^{-1}$.
For each odd prime $\ell\nmid M$, define
\begin{equation*}
 \Omega_\ell=
 \left\{a\bmod\ell:
 \left(\frac{F(a)}\ell\right)=-1\right\}.
\end{equation*}
The elementary identity
$\sum_{a\bmod\ell}(\frac{a^2-b}{\ell})=-1$ for $b\ne0\bmod\ell$ follows by counting the solutions of $(a-v)(a+v)=b$.
It gives
\begin{equation}
\label{eq:moment-local-counts}
 |\Omega_\ell|
 =\frac{\ell-\left(\frac{hP}{\ell}\right)}2,
 \qquad
 \mathbf 1_{0\in\Omega_\ell}
 =\frac{1-\left(\frac{-hP}{\ell}\right)}2.
\end{equation}
At a prime dividing $h$, the polynomial $F$ vanishes identically.
At an odd prime dividing $P$ it is the square $h^2a^2$.
Thus the Euler factors at primes dividing $M$ all have the value $(1-1/\ell)^{-1}$.

For $\ell\nmid M$, choose independent random variables $\epsilon_\ell\in\{0,1\}$ with
\begin{equation}
\label{eq:moment-activation}
 \Pr(\epsilon_\ell=1)=a_{\ell,j}
 =1-\left(\frac{\ell-1}{\ell+1}\right)^j.
\end{equation}
When $\epsilon_\ell=1$, exclude the classes in $\Omega_\ell$.
Also exclude the zero class at every prime.
More precisely, let
\begin{equation*}
 \Gamma_\ell=
 \begin{cases}
  \{0\}\cup\Omega_\ell,&\ell\nmid M,\ \epsilon_\ell=1,\\
  \{0\},&\text{otherwise}.
 \end{cases}
\end{equation*}
For a prime $q>z$, the probability that $q$ avoids all the sets $\Gamma_\ell$ with $\ell\le z$ is $E_z(m_q)^j/\mathfrak M(z)^j$.
Indeed, the ratio between the factors with $\psi_m(\ell)=-1$ and $\psi_m(\ell)=1$ is $(\ell-1)/(\ell+1)$.
Consequently,
\begin{equation}
\label{eq:moment-survival-majorant}
 \sum_{\substack{q\in J\\q\text{ prime}}}E_z(m_q)^j
 \le \mathfrak M(z)^j
 \sum_{a\in J\cap\mathbb Z}
 \mathbb E\,\mathbf 1_{a\notin\Gamma_\ell\ (\ell\le z)}.
\end{equation}

Write $g(\ell)=\mathbb E|\Gamma_\ell|/\ell$.
For $\ell\nmid M$, equations
\eqref{eq:moment-local-counts} and \eqref{eq:moment-activation} give
\begin{equation}
\label{eq:moment-local-density}
 g(\ell)=\frac1\ell+
 a_{\ell,j}\frac{\ell-1+\delta_\ell}{2\ell},
 \qquad
 \delta_\ell
 =\left(\frac{-hP}\ell\right)
 -\left(\frac{hP}\ell\right)\in\{-2,0,2\}.
\end{equation}
For $\ell\mid M$, one has $g(\ell)=1/\ell$.
The inequality $a_{\ell,j}\le2j/(\ell+1)$ shows that $\ell g(\ell)\le\kappa$.
Moreover, $0<g(\ell)<1$ at every prime.
For a prime outside $M$, there are at most $\ell-1$ nonzero classes in $\Omega_\ell$, and hence
\begin{equation}
\label{eq:moment-small-primes}
 1-g(\ell)\ge
 \left(1-\frac1\ell\right)
 \left(\frac{\ell-1}{\ell+1}\right)^j>0.
\end{equation}
The same lower bound holds at primes dividing $M$.

Choose a fixed $s_j\ge\max\{2,\beta_\kappa+1\}$ and let $y=z^{2/s_j}$, $D=z^2=y^{s_j}$.
We first suppose that $y\ge2$.
The Euler product remains truncated at $z$, and we sieve with the primes $\ell<y$.

For an integer $a\in J$ and a choice of the random sets, let
$n(a)=\prod_{\substack{
\ell<y\\a\bmod\ell\in\Gamma_\ell}}\ell$, $A_n=\sum_{a\in J\cap\mathbb Z}\Pr(n(a)=n)$.
This defines a finite nonnegative sequence $\mathcal A=(A_n)$.
Its sifted sum is
$$
 S(\mathcal A,y)=
 \sum_{a\in J\cap\mathbb Z}
 \Pr(a\bmod\ell\notin\Gamma_\ell\text{ for every }\ell<y).
$$
Since $y\le z$, equation \eqref{eq:moment-survival-majorant} gives
\begin{equation}
\label{eq:moment-sieve-majorant}
 \sum_{\substack{q\in J\\q\text{ prime}}}
 E_z(m_q)^j
 \le
 \mathfrak M(z)^j S(\mathcal A,y).
\end{equation}

Extend $g$ multiplicatively to square-free integers.
For $d\mid\mathcal P(y)$, independence and the Chinese remainder
theorem give
\begin{equation}
\label{eq:moment-crt-average}
 |\mathcal A_d|
 =|J|g(d)+r_d,\qquad
 |r_d|\ll\kappa^{\omega(d)}.
\end{equation}
Indeed, each residue class modulo $d$ contains $|J|/d+O(1)$ integers in $J$, and the expected number of excluded
classes modulo $d$ is $\prod_{\ell\mid d}\ell g(\ell)\le\kappa^{\omega(d)}$.

We check the dimension condition uniformly in $h$ and $P$.
Expanding \eqref{eq:moment-local-density} gives
$g(\ell)=\frac{\kappa}{\ell}+O_j(\ell^{-2})$ for $\ell\nmid M$, and $g(\ell)=\frac1\ell$ for $\ell\mid M$.
Thus, for sufficiently large $\ell$ in terms of $j$, $-\log(1-g(\ell))\le \kappa/\ell+O_j(\ell^{-2})$.
Mertens' estimate for the sum of reciprocal primes verifies \eqref{eq:beta-dimension} with $L=L_j$.
The finitely many smaller primes are controlled uniformly by
\eqref{eq:moment-small-primes}.
At a prime outside $M$ we also have $\frac{1-g(\ell)}{(1-1/\ell)^\kappa}
 =1+O_j(\ell^{-2})$.
The product of these ratios is bounded above and below by positive constants depending only on $j$.
Since $M=2hP$, it follows that
\begin{equation}
\label{eq:moment-sifting-product}
 V(y)
 \asymp_j(\log y)^{-\kappa}
 \prod_{\substack{\ell<y\\\ell\mid M}}
(1-1/\ell)^{1-\kappa}
 \ll_j
 \left(\frac{hP}{\varphi(hP)}\right)^\kappa
 (\log y)^{-\kappa}.
\end{equation}

Apply Lemma \ref{lem:upper-bound-sieve} with $X=|J|$ and use
\eqref{eq:moment-divisor-sum}. Equations
\eqref{eq:moment-crt-average} and \eqref{eq:moment-sifting-product}
give
$$
 S(\mathcal A,y)\ll_j
 \left(\frac{hP}{\varphi(hP)}\right)^\kappa
 \frac{|J|}{(\log y)^\kappa}
 +D(\log(2D))^{\kappa-1}.
$$
Finally, $\mathfrak M(z)\ll\log(2z)$ by Mertens' product estimate
\cite{MontgomeryVaughan2006}, while
$\log y=(2/s_j)\log z$ and $\kappa=j+1$.
Substitution in \eqref{eq:moment-sieve-majorant} yields
$$
 \sum_{\substack{q\in J\\q\text{ prime}}}E_z(m_q)^j
 \ll_j
 \left(\frac{hP}{\varphi(hP)}\right)^{j+1}
 \frac{|J|}{\log z}
 +z^2(\log(2z))^{2j}.
$$
This proves \eqref{eq:prime-euler-moments}.
If $y<2$, then $z$ is bounded in terms of $j$.
The bound $E_z(m)^j\le\mathfrak M(z)^j\ll_j1$ and the estimate
$\#(J\cap\mathbb Z)\le|J|+1$ prove the same assertion.
\end{proof}

\subsection{Moments of the arithmetic sums}
\begin{proposition}
\label{prop:prime-shell-moments}
Fix a prime $p$ and a positive integer $j$. For $Q\ge3$ and every positive integer $k$ with $p^k\le(2Q)^4$, one has
\begin{equation}
\label{eq:prime-shell-moments}
 \sum_{\substack{Q\le q\le2Q\\q\text{ prime}}}
 \mathcal B_q(k)^j\ll_{p,j}\frac Q{\log Q}.
\end{equation}
Here $\mathcal B_q(k)$ is defined by \eqref{eq:arithmetic-sums}.
The constant in \eqref{eq:prime-shell-moments} is independent of $k$ and $Q$.
\end{proposition}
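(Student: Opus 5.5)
The plan is to reduce $\mathcal B_q(k)^j$, by H\"older's inequality, to sums of $E_z(m_q)^j$ over the prime $q$ with $h$ held fixed. These sums are exactly what Lemma~\ref{lem:prime-euler-moments} controls. Write $P=p^k$ and fix $z=Q^{1/4}$; for bounded $Q$ the statement is trivial, so assume $Q$ is large. For $q\in[Q,2Q]$ and $1\le h<P/q^2$, set $m=m_q(h)=h(P-q^2h)$. Then $4m\le 4hP\le 4P^2\le 4(2Q)^8$, so we may take $V\asymp Q^8$ in Lemma~\ref{lem:short-euler-upper}. This makes $\log V/\log z\ll 1$, and gives
\[
 r_3(m_q(h))\ll \sqrt{m_q(h)}\,E_z(m_q(h))\le \sqrt{hP}\,E_z(m_q(h)).
\]
If $P<Q^2$, the sum $S(P,q)$ is empty for all $q\ge Q$ and there is nothing to prove. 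Otherwise put $H=P/Q^2$, so every $h$ that occurs satisfies $h<H$.

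Define weights $w_q(h)=q^3P^{-2}\sqrt{m_q(h)}$. Since $\sqrt{m}\le P/(2q)$ and there are fewer than $P/q^2$ values of $h$, we get $\sum_h w_q(h)\le q^3P^{-2}(P/q^2)(P/2q)\le 1/2$. This holds uniformly in $q$, $k$ and $Q$. Moreover $\mathcal B_q(k)\ll\sum_h w_q(h)E_z(m_q(h))$. H\"older's inequality, or Jensen's inequality with total weight at most one, then gives
\[
 \mathcal B_q(k)^j\ll_j\sum_{h<P/q^2}w_q(h)E_z(m_q(h))^j
 \le \frac{8Q^3\sqrt{P}}{P^2}\sum_{h<H}\sqrt h\,E_z(m_q(h))^j\,\mathbf 1_{q<\sqrt{P/h}}.
\]
Now sum over primes $q\in[Q,2Q]$ and interchange the order of summation. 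For each $h<H$, the relevant $q$ form an interval $J_h=[Q,\min(2Q,\sqrt{P/h}))$. It lies in $(z,\infty)$, has length at most $Q$, and $m_q(h)>0$ on it. Lemma~\ref{lem:prime-euler-moments} therefore bounds the inner sum by
\[
 \Bigl(\tfrac{hP}{\varphi(hP)}\Bigr)^{j+1}\frac{Q}{\log Q}+Q^{1/2}(\log Q)^{3j}.
\]
We use $hP/\varphi(hP)\le\frac{p}{p-1}\cdot h/\varphi(h)$. Partial summation applied to \eqref{eq:moment-totient-average} gives $\sum_{h<H}\sqrt h\,(h/\varphi(h))^{j+1}\ll_j H^{3/2}$. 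The main term is then
\[
 \ll_{p,j}\frac{Q^3\sqrt P}{P^2}\,H^{3/2}\,\frac{Q}{\log Q}=\frac{Q}{\log Q},
\]
because $\sqrt P\,H^{3/2}=P^2/Q^3$. The second term contributes $\ll Q^{1/2}(\log Q)^{3j}$, which is acceptable.

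The main obstacle is keeping everything uniform in $k$: the range of $h$ grows with $P$, and the log factor from Lemma~\ref{lem:short-euler-upper} must not grow. The choices above handle this. The hypothesis $p^k\le(2Q)^4$ keeps $\log V/\log z$ bounded. The crude bound $\sqrt m\le\sqrt{hP}$ loses only a constant after summing over $h$, since the exact normalization $q^3/P^2$ is matched by $\sqrt P H^{3/2}$. The one point still to verify is that the small-$Q$ and $y<2$ cases of Lemma~\ref{lem:prime-euler-moments} cause no trouble; they affect only bounded ranges of $Q$.
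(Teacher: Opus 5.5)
Your proposal is correct and follows essentially the paper's route: H\"older in $h$, then interchanging the sums over $h$ and $q$, then Lemma~\ref{lem:prime-euler-moments} on each interval $J_h$, and finally \eqref{eq:moment-totient-average} to sum $(hP/\varphi(hP))^{j+1}$ over $h<H$. The differences are cosmetic. You weight by $\sqrt{m_q(h)}\le\sqrt{hP}$ and take $z=Q^{1/4}$, whereas the paper uses the uniform bound $\sqrt{m_q}\le P/(2q)$ to get equal weights $1/H$ and takes $z=Q^{1/8}$, $V=256Q^6$.
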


\begin{proof}
Let $P=p^k$ and $H=P/Q^2$.
If $H\le1$, all the sums vanish.
Suppose $H>1$.
We first take $Q$ sufficiently large in terms of $j$ and let
\begin{equation}
\label{eq:moment-sieve-parameters}
 z=Q^{1/8},\qquad V=256Q^6.
\end{equation}
For $Q\le q\le2Q$ and $1\le h<P/q^2$, completing the square gives $0<m_q=h(P-q^2h)\le
 \frac{P^2}{4q^2}\le64Q^6$.
Lemma~\ref{lem:short-euler-upper} gives $r_3(m_q)\ll
 \sqrt{m_q}\frac{\log V}{\log z}E_z(m_q)
 \ll\frac Pq E_z(m_q)$.
The ratio of logarithms is bounded by an absolute constant.
Since $q^2/P\le4/H$, it follows that $\mathcal B_q(k)
 \ll\frac1H\sum_{\substack{1\le h<H\\h<P/q^2}}E_z(m_q)$.
There are at most $H$ terms.
H\"older's inequality therefore yields
\begin{equation}
\label{eq:moment-holder-h}
 \mathcal B_q(k)^j
 \ll_j\frac1H
 \sum_{\substack{1\le h<H\\h<P/q^2}}E_z(m_q)^j.
\end{equation}

For each $h<H$, the permitted values of $q$ form the interval $J_h=[Q,2Q]\cap\bigl(0,\sqrt{P/h}\bigr)$,
which has length at most $Q$.
Lemma~\ref{lem:prime-euler-moments}
and \eqref{eq:moment-sieve-parameters} give
\begin{equation}
\label{eq:moment-fixed-h-bound}
 \sum_{\substack{q\in J_h\\q\text{ prime}}}E_z(m_q)^j
 \ll_j
 \left(\frac{hP}{\varphi(hP)}\right)^{j+1}
 \frac Q{\log Q}
 +Q^{1/4}(\log Q)^{3j}.
\end{equation}
Since $P$ is a power of the fixed prime $p$, $\frac{hP}{\varphi(hP)}
 \le\frac p{p-1}\frac h{\varphi(h)}$.
Equation \eqref{eq:moment-totient-average}, with $\kappa=j+1$, gives $\sum_{1\le h<H}\left(\frac{hP}{\varphi(hP)}\right)^{j+1}
 \ll_{p,j}H$.
Summing \eqref{eq:moment-fixed-h-bound} over $h$, and then using \eqref{eq:moment-holder-h}, proves
\begin{equation*}
 \sum_{\substack{Q\le q\le2Q\\q\text{ prime}}}\mathcal B_q(k)^j
 \ll_{p,j}\frac Q{\log Q}
  +Q^{1/4}(\log Q)^{3j}
 \ll_{p,j}\frac Q{\log Q}.
\end{equation*}
The last step uses that $j$ is fixed.
Increasing the constant covers the bounded range of $Q$ left at the start of the proof.
\end{proof}

\section{Proof of the main results}\label{sec:main-proof}

Throughout this section, let $D_q=N_q\sup_{c\in I}\Delta_q(\alpha_{q,c})$.
\begin{proof}[Proof of Theorem~\ref{thm:main-results}\textup{(i)}]
\emph{The upper bound.} Apply the sieve with $q$ fixed.
For $k\le K_q$, let $P=p^k$ and $H=H_{q,k}=P/q^2$.
The sum in \eqref{eq:arithmetic-sums} vanishes when $H\le1$. For $H>1$, we claim that
\begin{equation}
\label{eq:individual-shell-bound}
 \mathcal B_q(k)\ll_p
 \frac{\log(2qH)}{\log(2H)}
 \ll_p\frac{\log q}{\log(2H)}.
\end{equation}
The last inequality uses $H\le q^2$, which follows from \eqref{eq:cutoff-index}.

Write $m_h=h(P-q^2h)$.
For an odd prime $\ell\nmid P$, the polynomial $q^2h^2-Ph$ has exactly $(\ell-1)/2$ classes with Legendre symbol $-1$.
The character sum identity used in Lemma~\ref{lem:prime-euler-moments}
gives this count when $\ell\nmid q$.
When $\ell\mid q$, the polynomial
is linear.
Let $\Omega_\ell$ denote these classes.
For $\ell\nmid2P$, take $\Gamma_\ell=\Omega_\ell$ with probability $2/(\ell+1)$ and $\Gamma_\ell=\varnothing$ otherwise, independently at each prime.
Let $\Gamma_\ell=\varnothing$ for $\ell\mid2P$.
With $h$ as the sieve variable, the expected density is $g(\ell)=\frac{\ell-1}{\ell(\ell+1)}$ for $\ell\nmid2P$.
Let $g(\ell)=0$ for $\ell\mid2P$.
As in the proof of Lemma \ref{lem:prime-euler-moments}, the integers $1\le h<H$ and the random choices define a nonnegative sequence,
indexed by the product of the primes $\ell<z$ whose classes are excluded.
The Chinese remainder theorem gives $|\mathcal A_d|=Hg(d)+O(1)$ for $d\mid\mathcal P(z)$, since the expected number of excluded classes modulo $d$ is at most one.
Here $0\le g(\ell)\le1/\ell$, so
\eqref{eq:beta-dimension} holds with $\kappa=1$ and an absolute $L$.
Moreover,
$$
 V(z)=\prod_{\ell<z}(1-g(\ell))
 \ll\frac{P}{\varphi(P)}\frac1{\log(2z)}.
$$
This follows from $g(\ell)=1/\ell+O(\ell^{-2})$ off $2P$ and Mertens' product estimate.
Apply Lemma \ref{lem:upper-bound-sieve} with $s=2$ and $D=z^2$.
As above, $E_z(m_h)/\mathfrak M(z)$ is at most the probability of avoiding the classes $\Gamma_\ell$ for every $\ell<z$.
Multiplying the resulting bound by
$\prod_{\ell\le z}(1-1/\ell)^{-1}\ll\log(2z)$ gives
\begin{equation}
\label{eq:individual-euler-average}
 \sum_{1\le h<H}E_z(m_h)
 \ll\frac P{\varphi(P)}H+z^2\log(2z).
\end{equation}
For $H\ge16$, take $z=H^{1/4}$ and $V=16(qH)^2$.
Since $\sqrt{m_h}\le P/(2q)$, Lemma~\ref{lem:short-euler-upper} and \eqref{eq:individual-euler-average} prove \eqref{eq:individual-shell-bound}.
For $1<H<16$, the same lemma with $z=2$ proves the bound.

Finally, let $L=\log q$ and $k_0=\lfloor2\log_pq\rfloor+1$.
Then $\mathcal B_q(k)=0$ for $k<k_0$, and $H_{q,k_0+i}=p^iH_{q,k_0}$ with $1<H_{q,k_0}\le p$.
Thus \eqref{eq:individual-shell-bound} gives
$$
\sum_{k\le K_q}\mathcal B_q(k)
 \ll_p L\sum_{i=0}^{K_q}\frac1{i+1}
 \ll_p L\log L.
 $$
Equation~\eqref{eq:return-reduction} consequently yields $D_q\ll_{p,c_0,c_1}\log\log q$.

\emph{The lower bound.} The error in \eqref{eq:spectral-return}, relative
to $(p+1)p^{2k-1}/N_q$, is $O_p((k+1)N_qp^{-k})$.
This tends to zero uniformly for $k>K_q$. Positivity therefore gives
$$
\Delta_q(\alpha_{q,c})\ge
 \frac{p+1}{pN_q}\,
 \frac{\mathcal M_p(\alpha_{q,c})t^{2K_q+2}}{1-t^2}.
$$
By \eqref{eq:critical-relations} and \eqref{eq:critical-t}, the ratio
$\mathcal M_p/(1-t^2)=((1+t/p)(1+t))^{-1}$ and the factor $t^{2K_q+2}$ are bounded below uniformly for $c\in I$.
This proves the first inequality in \eqref{eq:individual-return-bound}.
\end{proof}

\begin{proof}[Proof of Theorem~\ref{thm:main-results}\textup{(ii)}]
The lower bound is part~\textup{(i)}. For the upper bound, let
$L=\log q$, $Y=q^6$, and $z=(\log Y)^8$.
For $k\le K_q$, write $P=p^k$, $H=P/q^2$, and $m_h=h(P-q^2h)$.
The sum defining $\mathcal B_q(k)$ is empty when $H\le1$.
Otherwise, for $1\le h<H$, $0<m_h\le\frac{P^2}{4q^2} \le\frac{q^6}{4}=\frac Y4$, and $\sqrt{m_h}\le\frac P{2q}$, since $P\le q^4$.
Lemma~\ref{lem:grh-short-euler} and
\eqref{eq:individual-euler-average} therefore give
\begin{equation}
\label{eq:grh-single-exponent}
 \mathcal B_q(k)
 =\frac{q^3}{P^2}\sum_{1\le h<H}r_3(m_h)
 \ll\frac1H\sum_{1\le h<H}E_z(m_h)
 \ll_p 1+\frac{z^2\log(2z)}H.
\end{equation}
Here $P/\varphi(P)=p/(p-1)$.
It follows that $\mathcal B_q(k)\ll_p1$ when $H\ge z^4$.

For $1<H<z^4$, Mertens' product estimate gives $E_z(m_h)\le\prod_{\ell\le z}(1-1/\ell)^{-1}
 \ll\log(2z)$.
The second line of \eqref{eq:grh-single-exponent} then gives $\mathcal B_q(k)\ll\log(2z)$.
There are $O_p(\log(2z))$ such exponents, because
$H_{q,k+1}=pH_{q,k}$.
The full range $k\le K_q$ contains $O_p(L)$ exponents, so
\begin{equation*}
 \sum_{k\le K_q}\mathcal B_q(k)
 \ll_p L+(\log(2z))^2
 \ll_p L+(\log L)^2
 \ll_p L.
\end{equation*}
Inserting this into Lemma~\ref{lem:return-reduction} proves \eqref{eq:grh-return}, uniformly for $c\in I$.
For this level, the proof uses GRH only for $|D|\le q^6$.
\end{proof}

We next average the arithmetic sums over their exponents.

\begin{proposition}
\label{prop:exponent-moments}
For every fixed positive integer $j$,
\begin{equation}
\label{eq:exponent-moments}
 \sum_{q\in\Acal(Q)}
 \left(\frac1{\log q}\sum_{k\le K_q}\Bcal_q(k)\right)^j
 \ll_{p,j}\frac Q{\log Q}.
\end{equation}
\end{proposition}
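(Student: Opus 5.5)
The plan is to reduce \eqref{eq:exponent-moments} to Proposition~\ref{prop:prime-shell-moments} by H\"older's inequality over the exponents $k$. Throughout, $q\in\Acal(Q)$, so $Q<q\le2Q$ and $\log q\asymp\log Q$.

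\textbf{Step 1: uniform range of exponents.} The cutoff satisfies $K_q=\lfloor4\log_pq\rfloor\le K:=\lfloor4\log_p(2Q)\rfloor$, and every $k\le K$ has $p^k\le(2Q)^4$. Since $\Bcal_q(k)\ge0$, I will replace the $q$-dependent range $k\le K_q$ by the uniform range $1\le k\le K$. Note that $K\ll_p\log Q$.

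\textbf{Step 2: H\"older in $k$.} Applying H\"older's inequality with $K$ terms gives
\[
\left(\frac1{\log q}\sum_{k\le K}\Bcal_q(k)\right)^j\le\frac{K^{j-1}}{(\log q)^j}\sum_{k\le K}\Bcal_q(k)^j\ll_{p,j}\frac1{\log Q}\sum_{k\le K}\Bcal_q(k)^j.
\]

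\textbf{Step 3: sum over $q$.} Next I sum over $q\in\Acal(Q)$. Dropping the admissibility conditions only enlarges the sum, since all terms are nonnegative, so I can sum over all primes $Q\le q\le2Q$. Proposition~\ref{prop:prime-shell-moments} applies to each $k\le K$ and gives a bound $\ll_{p,j}Q/\log Q$, uniformly in $k$. Summing over the $K\ll_p\log Q$ values of $k$ and multiplying by the factor $1/\log Q$ from Step 2 yields
\[
\sum_{q\in\Acal(Q)}\left(\frac1{\log q}\sum_{k\le K_q}\Bcal_q(k)\right)^j\ll_{p,j}\frac1{\log Q}\cdot\log Q\cdot\frac Q{\log Q}=\frac Q{\log Q},
\]
which is \eqref{eq:exponent-moments}.

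\textbf{Expected difficulty.} I expect no serious obstacle. The one point to check is that the bound in Proposition~\ref{prop:prime-shell-moments} is uniform in $k$ throughout the range $p^k\le(2Q)^4$, which the proposition states explicitly. For bounded $Q$ (say $Q<3$) the estimate holds trivially after enlarging the implied constant.
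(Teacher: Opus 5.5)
Your proposal is correct and follows the paper's proof: you enlarge the range of $k$ to the uniform cutoff $\lfloor 4\log_p(2Q)\rfloor$, apply H\"older's inequality in $k$, and then invoke Proposition~\ref{prop:prime-shell-moments} for each exponent, using $K\ll_p\log Q$ and $\log q\asymp\log Q$. Your additional remarks on positivity, on dropping the admissibility conditions, and on the uniformity in $k$ are exactly the points the paper's short argument relies on.
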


\begin{proof}
Let $K^*=\lfloor4\log_p(2Q)\rfloor$.
Positivity and H\"older's inequality give
$$
 \left(\sum_{k\le K_q}\Bcal_q(k)\right)^j
 \le(K^*)^{j-1}\sum_{k\le K^*}\Bcal_q(k)^j.
$$
Sum over $q\in\Acal(Q)$ and apply
Proposition~\ref{prop:prime-shell-moments} to each $k$.
Since $K^*\ll_p\log Q$ and $\log q\asymp\log Q$, this proves \eqref{eq:exponent-moments}.
\end{proof}

\begin{proof}[Proof of Theorem~\ref{thm:main-results}\textup{(iii)}]
Quadratic reciprocity identifies the admissible primes, apart from finitely many of them, with $(p-1)/2$ reduced residue classes modulo $4p$.
The prime number theorem in arithmetic progressions
\cite{MontgomeryVaughan2006} gives
$|\Acal(Q)|\sim\frac Q{4\log Q}$, and $|\Acal\cap[1,Y]|\sim\frac Y{4\log Y}$.
Apply Lemma~\ref{lem:return-reduction} and Proposition~\ref{prop:exponent-moments}.
These prove \eqref{eq:main-moments} when $a$ is an integer.
For real $a\ge1$, take an integer $j\ge a$ and apply H\"older's inequality to the average over $\Acal(Q)$.
\end{proof}

\begin{proof}[Proof of Corollary~\ref{cor:density-one}]
Theorem~\ref{thm:main-results}\textup{(iii)} and Markov's inequality give \eqref{eq:exceptional-levels}.
Summing the dyadic moment bounds also gives, for every fixed $a\ge1$,
\begin{equation}
\label{eq:cumulative-moments}
 \sum_{\substack{q\in\Acal\\q\le Y}}D_q^a
 \ll_{p,I,a}\frac Y{\log Y}.
\end{equation}
Finitely many initial levels at which the parameter is undefined are omitted from this sum.

Let $\Acal(w)=\{q\in\Acal:D_q\le w(q)\}$, with those initial levels also omitted.
For fixed $Y_0$ and $Y>Y_0$, monotonicity of $w$ and \eqref{eq:cumulative-moments} imply
$$
 \#\{q\in\Acal:Y_0<q\le Y,\ D_q>w(q)\}
 \le w(Y_0)^{-a}\sum_{\substack{q\in\Acal\\q\le Y}}D_q^a
 \ll_{p,I,a}w(Y_0)^{-a}\frac Y{\log Y}.
$$
Divide by $|\Acal\cap[1,Y]|$, first let $Y\to\infty$, and then let $Y_0\to\infty$.
This proves relative density one and \eqref{eq:density-one}.
\end{proof}

\begin{remark}[PageRank near the source]\label{rem:local}
Fix a nonnegative integer $r$. For all sufficiently large $q$, all vertices $x,y$ with $d(x,y)=r$, and all $c\in I$, one has
\begin{equation}
\label{eq:local-comparison}
 0\le\pi_{\alpha_{q,c},x}(y)-\mathcal M_p(\alpha_{q,c})u_{q,c}^r
 \ll_{p,I,r}\Delta_q(\alpha_{q,c}).
\end{equation}
Here $d$ is graph distance and $u_{q,c}$ is defined by \eqref{eq:alpha-u}.
The case $r=0$ is the definition of $\Delta_q$.
Suppose $r\ge1$.
Let $s_0=1$, $s_j=(p+1)p^{j-1}$ for $j\ge1$, and average over each distance sphere about $x$.
As $2r<g_q$ for large $q$, these spheres up to radius $r$ have the tree cardinalities.
Let $z=(p+1)/\alpha$ and let $h_j$ be the spherical average of $\pi_{\alpha,x}-\mathcal M_p(\alpha)u^{d(x,\cdot)}$.
The resolvent equation gives
$zh_0=(p+1)h_1$, and $zh_j=h_{j-1}+ph_{j+1}$ for $1\le j<r$.
At the source, $h_0=\Delta_q(\alpha)$.
The path polynomials in the proof of Lemma~\ref{lem:return-reduction} therefore give $s_rh_r=Q_r(z) \Delta_q(\alpha)$.
Each individual excess is nonnegative by the covering path expansion:
the unique path to a chosen lift at distance $r$ contributes $\mathcal M_p(\alpha)u^r$.
It is consequently at most $s_rh_r$.
For fixed $r$, $Q_r((p+1)/\alpha_{q,c})$ is bounded uniformly for $c\in I$, proving \eqref{eq:local-comparison}.
Thus the upper bounds in Theorem~\ref{thm:main-results}\textup{(i)}
and~\textup{(ii)}, the moment estimate in part~\textup{(iii)}, and
Corollary~\ref{cor:density-one} also hold, with constants depending on $r$,
for the largest excess at distance $r$ from any source.
\end{remark}

\section*{Acknowledgements}
S.H. thanks Xiaoyao Zheng for suggesting the study of PageRank in 2016, Xianchang Meng for introducing the Ihara zeta function in a course on probabilistic number theory in 2022, and Chung Pang Mok for recommending Sarnak's Penn State lecture notes and for his collaboration with S.H. on murmurations.

%%%%%%%%%%%%%%%%%%%%%%%%%%%%%%%%%%%%%%%%%%%%%%%%%%%%%%%%%%%%%%
%%%%%                      References                    %%%%%
%%%%%%%%%%%%%%%%%%%%%%%%%%%%%%%%%%%%%%%%%%%%%%%%%%%%%%%%%%%%%%

\appendix
\section{The graph construction}\label{app:construction}

\subsection{The vertices and edges}
Fix primes $p,q\equiv1\pmod4$ with $\left(\frac{p}{q}\right)=-1$ and $q>2\sqrt p$.
Start with the integer four-tuples
$$
 S_p=\{(a_0,a_1,a_2,a_3)\in\mathbb Z^4:
 a_0^2+a_1^2+a_2^2+a_3^2=p,\quad
 a_0>0\text{ odd},\quad a_1,a_2,a_3\text{ even}\}.
$$
Jacobi's four-square theorem gives $8(p+1)$ ordered representations of $p$, with signs counted.
A square is $0$ or $1$ modulo $4$, so exactly one coordinate is odd.
By symmetry, one quarter of these representations have $a_0$ odd.
Half of those have $a_0>0$, and hence $|S_p|=p+1$.
For example,
$$
 S_5=\{(1,\pm2,0,0),(1,0,\pm2,0),(1,0,0,\pm2)\}.
$$

Write $\mathbb F_q=\mathbb Z/q\mathbb Z$ and choose $\nu\in\F_q$ with $\nu^2=-1$, which is possible because $q\equiv1\pmod4$.
Turn each tuple $a=(a_0,a_1,a_2,a_3)$ into the matrix
\begin{equation}\label{eq:quaternion-matrix}
 \rho_q(a)=
 \begin{pmatrix}
 a_0+\nu a_1&a_2+\nu a_3\\
 -a_2+\nu a_3&a_0-\nu a_1
 \end{pmatrix}\pmod q.
\end{equation}
Expanding the determinant gives
$\det\rho_q(a)=a_0^2+a_1^2+a_2^2+a_3^2=p$ modulo $q$.
Thus each matrix is invertible.

The group $\PGL_2(\mathbb F_q)$ consists of invertible $2$ by $2$ matrices over $\mathbb F_q$, with two matrices identified when one is a nonzero scalar multiple of the other.
Write $[M]$ for the class of $M$.
Multiplication of matrices defines multiplication of these classes.
The vertices of $X_q$ are the elements of $\PGL_2(\F_q)$.
For each vertex $x$ and each $a\in S_p$, join $x$ to $x[\rho_q(a)]$.
In the usual notation for a Cayley graph,
$$
 X_q=\operatorname{Cay}\bigl(\PGL_2(\mathbb F_q),
\{[\rho_q(a)]:a\in S_p\}\bigr).
$$
Replacing $(a_1,a_2,a_3)$ by $(-a_1,-a_2,-a_3)$ gives another tuple
in $S_p$.
The two corresponding matrices multiply to $pI$.
Their classes are therefore inverses, and every edge can be used in both directions.

The generators are distinct.
If $\rho_q(a)=c\rho_q(b)$, comparison of determinants gives $c^2=1$, so $c=1$ or $-1$.
The matrix formula then gives $a=\pm b$ modulo $q$.
Every coordinate has absolute value at most $\sqrt p$.
Since $q>2\sqrt p$, the same equality holds over the integers.
The positive first coordinates force the plus sign.
Also, a scalar matrix in \eqref{eq:quaternion-matrix} has $a_1=a_2=a_3=0$ modulo $q$.
The coordinate bound would make these three integers zero, contradicting $a_0^2=p$.
Thus the graph has no loops or repeated edges, and every vertex has exactly $p+1$ neighbors.

The reduction theorem of Lubotzky, Phillips, and Sarnak says that these generators generate $\PGL_2(\mathbb F_q)$ \cite[Proposition~3.3]{LPS1988}.
Every vertex can therefore be reached from the identity, so the graph is connected.
Left multiplication preserves edges and carries any vertex to any other, proving vertex transitivity.
Whether a determinant is a square is unchanged by scaling the matrix.
Each step changes this square class because $\left(\frac{p}{q}\right)=-1$.
The square and nonsquare determinant classes form the two parts of the bipartition.
There are $q^2-1$ choices for the first column of an invertible matrix and $q^2-q$ choices for its second column.
Dividing by the $q-1$ scalar choices gives
$$
|V(X_q)|=\frac{(q^2-1)(q^2-q)}{q-1}
=q(q^2-1).
$$
The Lubotzky--Phillips--Sarnak theorem also gives the Ramanujan bound: every adjacency eigenvalue other than $\pm(p+1)$ has absolute value at most
$2\sqrt p$ \cite{LPS1988}.

\subsection{Paths and quaternions}
To multiply the labels of a path, write a tuple as a quaternion
$\gamma=a_0+a_1i+a_2j+a_3k$.
The multiplication rules are
$i^2=j^2=k^2=-1$, $ij=k=-ji$, $jk=i=-kj$, and $ki=j=-ik$.
We use the same notation $S_p$ for the corresponding quaternions.
The conjugate and norm are
$$
 \overline\gamma=a_0-a_1i-a_2j-a_3k,
 \qquad
 \Nrd(\gamma)=\gamma\overline\gamma
             =a_0^2+a_1^2+a_2^2+a_3^2.
$$
Norms multiply, and the matrix map \eqref{eq:quaternion-matrix} preserves multiplication.
A path from the identity with labels
$\gamma_1,\ldots,\gamma_n\in S_p$ therefore ends at $[\rho_q(\gamma_1\cdots\gamma_n)]$.
Its product quaternion has norm $p^n$.
Following a generator by its conjugate immediately reverses the edge, since $\gamma\overline\gamma=p$ is scalar.

Quaternionic factorization gives a bijection between paths of length $n$ from the identity without immediate reversals and integral quaternions of norm $p^n$ whose first coordinate is odd, whose other coordinates are even, and whose coordinates are not all divisible by $p$, up to an overall sign \cite[Lemma~2.3]{LPSHecke1987}.
Choose the sign making the first coordinate positive.
Dividing a quaternion of norm $p^n$ by a common factor $p^j$ reduces its norm to $p^{n-2j}$.
After all common factors $p$ have been removed, factorization gives a path of length $n-2j$.
These are the shorter paths summed in \eqref{eq:complete-hecke}.

A path closes when its product matrix is scalar modulo $q$.
For a product quaternion, \eqref{eq:quaternion-matrix} is scalar
exactly when $q\mid a_1,a_2,a_3$.
This gives the condition in Lemma~\ref{lem:quaternion-count}.
Left multiplication gives the same description of paths starting at any other vertex.

\subsection{The tree and the Hecke operator}
The same matrix formula can be used over the $p$-adic numbers.
An element of $\mathbb Z_p$ is an expansion $\sum_{r\ge0}d_rp^r$ with $0\le d_r<p$, specified by its compatible residues modulo $p,p^2,\ldots$.
The field $\mathbb Q_p$ also allows finitely many negative powers of $p$.
Since $p\equiv1\pmod4$, a square root of $-1$ modulo $p$ lifts successively modulo every power of $p$, giving a square root in $\mathbb Z_p$.
Use this root in place of $\nu$ in the matrix formula.
The resulting map identifies the quaternion algebra over $\mathbb Q_p$ with $M_2(\mathbb Q_p)$ and the integral coordinates with $M_2(\mathbb Z_p)$.

A lattice in $\mathbb Q_p^2$ is the set of $\mathbb Z_p$-linear combinations of a basis.
Identify two lattices when one is a nonzero scalar multiple of the other.
Join two classes when representatives satisfy $pL\subset L'\subset L$ and $L/L'$ has $p$ elements.
For a fixed $L$, the choices of $L'$ correspond to the lines in $L/pL\simeq\F_p^2$.
There are $(p^2-1)/(p-1)=p+1$ such lines.
Each class has a unique representative contained in $\mathbb Z_p^2$ with at least one vector having a coordinate not divisible by $p$.
Elementary row and column operations give a representative $L=\mathbb Z_pe_1+\mathbb Z_pp^re_2$, where $r\ge0$ and $e_1,e_2$ form a basis of $\mathbb Z_p^2$.
For $r>0$, exactly one neighbor has exponent $r-1$, and the other $p$ neighbors have exponent $r+1$.
Repeatedly moving to the first neighbor gives a unique path to the class with $r=0$.
The graph is therefore a tree, called the Bruhat--Tits tree.
The class of $\mathbb Z_p^2$ is its base vertex.
Its stabilizer is $K_p=\PGL_2(\mathbb Z_p)$, so the vertex set is $\PGL_2(\mathbb Q_p)/K_p$.

Let $\Gamma_p$ be the group generated by the projective classes of the quaternions in $S_p$, where scalar multiples are identified.
The $p+1$ generators carry the base vertex to its $p+1$ neighbors.
Quaternionic factorization shows that exactly one element of $\Gamma_p$ carries the base vertex to each tree vertex
\cite[Lemma~2.3]{LPSHecke1987}.
To reduce an element modulo $q$, choose an integral quaternion representative whose norm is a power of $p$.
Two such representatives differ by a scalar $\pm p^j$.
Since $q\ne p$, their reductions give the same projective matrix class.
This defines a surjection $\Gamma_p\to\PGL_2(\F_q)$.
Let $\Gamma_p(q)$ be its kernel, namely, the elements whose reduction is the identity matrix class.
We obtain
$$
 V(X_q)\simeq\Gamma_p(q)\backslash\Gamma_p
 \simeq\Gamma_p(q)\backslash\PGL_2(\mathbb Q_p)/K_p.
$$
The left quotient identifies tree vertices related by an element of $\Gamma_p(q)$.

On this tree, the spherical Hecke operator at $p$ is the neighbor sum
$$
 (T_pf)(v)=\sum_{w\sim v}f(w).
$$
In group notation it is convolution with the indicator of $K_p\operatorname{diag}(p,1)K_p$, with invariant volume chosen so that $K_p$ has volume one.
This set is the disjoint union of $p+1$ sets $g_iK_p$.
Each has volume one, and the vertices $gg_iK_p$ are the neighbors of $gK_p$.
Hence convolution is exactly the displayed sum.
The sum descends to the finite quotient $X_q$ and gives $A_q$.


\begin{thebibliography}{99}

\bibitem{Bass1992}
Bass, Hyman.
The Ihara--Selberg zeta function of a tree lattice.
\emph{Internat. J. Math.} 3 (1992), no. 6, 717--797.


\bibitem{BerkovichJagy2012}
Berkovich, Alexander; Jagy, William C.
On representation of an integer as the sum of three squares and ternary
quadratic forms with the discriminants $p^2$, $16p^2$.
\emph{J. Number Theory} 132 (2012), no. 1, 258--274.


\bibitem{BiggsBoshier1990}
Biggs, Norman L.; Boshier, Alan G.
Note on the girth of Ramanujan graphs.
\emph{J. Combin. Theory Ser. B} 49 (1990), no. 2, 190--194.


\bibitem{FriedlanderIwaniec}
Friedlander, John B.; Iwaniec, Henryk.
\emph{Opera de cribro}.
American Mathematical Society Colloquium Publications, vol. 57.
American Mathematical Society, Providence, RI, 2010.


\bibitem{Gleich2015}
Gleich, David F.
PageRank beyond the Web.
\emph{SIAM Rev.} 57 (2015), no. 3, 321--363.


\bibitem{GranvilleSoundararajan2003}
Granville, Andrew; Soundararajan, Kannan.
The distribution of values of $L(1,\chi_d)$.
\emph{Geom. Funct. Anal.} 13 (2003), no. 5, 992--1028.


\bibitem{HuaMok2026}
Hua, Shenghao; Mok, Chung Pang.
Universal murmuration and Hecke augmentation.
(Preprint).
arXiv: 2609.00777.


\bibitem{KunduMuller2026}
Kundu, Debanjana; M\"uller, Katharina.
Murmurations of modular forms and $p$-power coefficients.
\emph{Math. Proc. Cambridge Philos. Soc.} 180 (2026), no. 1, 65--92.


\bibitem{LPS1988}
Lubotzky, Alexander; Phillips, Ralph Saul; Sarnak, Peter.
Ramanujan graphs.
\emph{Combinatorica} 8 (1988), no. 3, 261--277.


\bibitem{LPSHecke1987}
Lubotzky, Alexander; Phillips, Ralph Saul; Sarnak, Peter.
Hecke operators and distributing points on $S^2$. II.
\emph{Comm. Pure Appl. Math.} 40 (1987), no. 4, 401--420.


\bibitem{McKay1981}
McKay, Brendan D.
The expected eigenvalue distribution of a large regular graph.
\emph{Linear Algebra Appl.} 40 (1981), 203--216.


\bibitem{McKay1983}
McKay, Brendan D.
Spanning trees in regular graphs.
\emph{European J. Combin.} 4 (1983), no. 2, 149--160.


\bibitem{MontgomeryVaughan2006}
Montgomery, Hugh L.; Vaughan, Robert C.
\emph{Multiplicative Number Theory. I. Classical Theory}.
Cambridge Studies in Advanced Mathematics, vol. 97.
Cambridge University Press, Cambridge, 2006.


\bibitem{NestoridiSarnak2023}
Nestoridi, Evita; Sarnak, Peter.
Bounded cutoff window for the non-backtracking random walk on Ramanujan graphs.
\emph{Combinatorica} 43 (2023), no. 2, 367--384.


\bibitem{PageEtAl1999}
Page, Lawrence; Brin, Sergey; Motwani, Rajeev; Winograd, Terry.
The PageRank citation ranking: bringing order to the Web.
Stanford InfoLab Technical Report 1999-66, 1999.


\bibitem{Sarnak2025Lectures}
Sarnak, Peter.
Penn State lectures: Non-tame distribution of zeros of zeta functions;
spectra of surfaces and graphs.
Lecture notes, October 2025, with an addendum on an Archimedean McKay law.
\url{https://publications.ias.edu/sarnak/paper/2741}.


\bibitem{SarnakZubrilina2024}
Sarnak, Peter; Zubrilina, Nina.
Convergence to the Plancherel measure of Hecke eigenvalues.
\emph{Acta Arith.} 214 (2024), 191--213.


\bibitem{Serre1997}
Serre, Jean-Pierre.
R\'epartition asymptotique des valeurs propres de l'op\'erateur de Hecke $T_p$.
\emph{J. Amer. Math. Soc.} 10 (1997), no. 1, 75--102.


\bibitem{Zubrilina2025}
Zubrilina, Nina.
Murmurations.
\emph{Invent. Math.} 241 (2025), no. 3, 627--680.

\end{thebibliography}
\end{document}